\newcommand{\qedwhite}{\hfill \ensuremath{\Box}}
\newcommand{\Names}{Moshe Goldberg, Daniel Hershkowitz, and Daniel Szyld}
\newcommand{\Title}{Cospectral graphs with different zero forcing numbers}
\newtheorem{remark}[theorem]{Remark}
\newtheorem{example}[theorem]{Example}
\title{Constructions of cospectral graphs with different zero forcing numbers}
\date{}
\begin{document}
\bibliographystyle{plain}
\setcounter{page}{1}
\thispagestyle{empty}
\author{
Aida Abiad
\thanks{\texttt{a.abiad.monge@tue.nl}, Department of Mathematics and Computer Science, Eindhoven University of Technology, The Netherlands\\
Department of Mathematics: Analysis, Logic and Discrete Mathematics, Ghent University, Belgium\\
Department of Mathematics and Data Science, Vrije Universiteit Brussel, Belgium} 
\and 
Boris Brimkov
\thanks{\texttt{boris.brimkov@sru.edu}, Department of Mathematics and Statistics, Slippery Rock University, USA} 
\and 
Jane Breen
\thanks{\texttt{Jane.Breen@ontariotechu.ca}, Faculty of Science, Ontario Tech University, Canada} 
\and
Thomas R. Cameron
\thanks{\texttt{trc5475@psu.edu}, Department of Mathematics, Penn State Behrend, USA} 
\and
Himanshu Gupta 
\thanks{\texttt{himanshu@udel.edu}, Department of Mathematical Sciences, University of Delaware, USA} 
\and 
Ralihe R. Villagr\'an 
\thanks{\texttt{rvillagran@math.cinvestav.mx}, Departamento de Matem\'aticas, Centro de Investigaci\'on y de Estudios Avanzados del IPN, Mexico} 
}
\markboth{\Names}{\Title}

\maketitle

\begin{abstract}
Several researchers have recently explored various graph parameters that can or cannot be characterized by the spectrum of a matrix associated with a graph. In this paper we show that several NP-hard zero forcing numbers are not characterized by the spectra of
several types of associated matrices
 with a graph. In particular, we consider standard zero forcing, positive semidefinite zero forcing, and skew zero forcing, and provide constructions of infinite families of pairs of cospectral graphs which have different values for these numbers. We explore several methods for obtaining these cospectral graphs including using graph products, graph joins, and graph switching. Among these, we provide a construction involving regular adjacency cospectral graphs; the regularity of this construction also implies cospectrality with respect to several other matrices including the Laplacian, signless Laplacian, and normalized Laplacian. We also provide a construction where pairs of cospectral graphs can have an arbitrarily large difference between their zero forcing numbers.

\end{abstract}

\begin{keywords}
graph, spectral characterization, minimum rank, maximum nullity, zero forcing number
\end{keywords}
\begin{AMS}
05C50, 15A03, 15A18
\end{AMS}

\section{Introduction}

It is well-known that the spectrum of a matrix associated with a graph  encodes useful information about the graph. Properties that are
characterized by the spectrum of either the adjacency matrix or the Laplacian matrix of a graph include the number of vertices, the number of edges, and regularity. The spectrum of the adjacency matrix indicates whether or not the graph is bipartite, and the spectrum of the Laplacian matrix indicates whether or not the graph is connected. If a graph is regular (i.e. every vertex has the same degree), the spectrum of the adjacency matrix
can be determined from the spectrum of the Laplacian matrix, and vice versa. This implies that for both matrices the properties of being regular and bipartite, and being regular and connected
are characterized by the spectrum \cite{dh2003}. For other matrices, like the distance matrix, much less is known, see \cite{GRWC2016,grwc,hogbencaroynsurvey}.

If a property is not characterized by the spectrum, then there exists a pair of non-isomorphic graphs with the same spectrum (cospectral graphs), where one graph of the pair has this property, and the other does not. For many graph properties and several types of associated matrices, such pairs of cospectral graphs have been found; such properties include: being distance-regular \cite{h1996}, having a given diameter \cite{hs1995}, admitting a perfect matching \cite{bch2015}, being Hamiltonian \cite{lwyl2010} and having a given vertex or edge-connectivity \cite{h2020}.

Some famous NP-hard graph properties on being characterized by the spectrum
have recently been studied \cite{EH2020}. In this paper we focus on studying whether some other well-known NP-hard \cite{Znphard,Znphard2, Zposnphard} graph properties are characterized by the spectrum: the zero forcing number and some of its variants. The zero forcing number of a graph was
introduced in \cite{zerofircingorigin} as a bound for the minimum rank (or, equivalently, the maximum
nullity) of a matrix whose combinatorial structure is represented by the graph, and it has been extensively studied in the last fifteen years. Not many results relating zero forcing numbers to the spectrum of a graph are known; in \cite{sudakov} a lower bound is given on the zero forcing number of a regular graph in terms of the minimum eigenvalue of the adjacency matrix. 

In this paper we show that the values of several zero forcing numbers do not follow from the spectrum of the most common matrices (adjacency, Laplacian) associated with the graph. To this end, we provide several constructions of cospectral graphs with different zero forcing parameters, using graph products (Section \ref{sec:produtcconstruction}), the join operator (Section \ref{sec:join}) and graph switching (Section \ref{sec:GMconstruction}). As a corollary of our results, it follows that that zero forcing numbers are not determined by the spectrum of the adjacency and Laplacian matrix of a graph.

\section{Preliminaries}
 
Throughout this paper, we consider $G=(V,E)$ to be an undirected simple graph of order $|V|=n$. The \emph{standard zero forcing process} on a graph $G$ is defined as follows. Initially, there is a subset $S$ of blue vertices, while all other vertices are white. The \emph{standard color change rule} dictates that at each step, a blue vertex
with exactly one white neighbor will force its white neighbor to become blue. The set $S$ is said to be a zero forcing set if, by iteratively applying the color change rule, all of $V$ becomes blue. The \emph{zero forcing number} of $G$ is the minimum cardinality of a zero
forcing set in $G$, denoted by $Z(G)$. 

The zero forcing number $Z(G)$ gives a bound on the \emph{maximum nullity} of a graph, defined as follows. Let $G$ be a graph of order $n$, and let $S(G)$ denote the set of $n\times n$ symmetric real matrices $B=[b_{ij}]$ such that $b_{ij}\neq 0$ if and only if $\{i,j\}$ is an edge in $G$; that is, the graph of the matrix $B$, denoted $G(B)$, is equal to $G$. The \emph{maximum nullity} of $G$ and \emph{minimum rank} of $G$ are defined as follows:
\vspace{-0.1cm}
\begin{align*}
   & M (G) = \max \{\text{null}(X):  X\in \mathcal{S}(G)\}\\
   & mr (G) = \min\{\text{rank}(X): X \in \mathcal{S}(G)\}.
\end{align*}
The bound provided by $Z(G)$ is that $M(G) \leq Z(G)$, or equivalently, $mr(G) \geq n-Z(G)$.

A variant of the (standard) zero forcing number $Z(G)$ is the \emph{skew zero forcing number} $Z_{-}(G)$, which uses the same definition with a different color change rule: a vertex $v$ becomes blue if it is the unique white neighbor of some other vertex in the graph (regardless of whether that other vertex is blue or white). If repeated application of the color change rule results in all vertices in the graph eventually being blue, then the initial set of blue vertices is a \emph{skew zero forcing set}. The minimum cardinality of such a set is the skew zero forcing number of $G$. As suggested by the name, the skew zero forcing number provides an upper bound on the maximum nullity $M_{-}$ of skew-symmetric matrices $B$ whose corresponding graph is $G$. More precisely, let $S_{-}(G)$ denote the set of $n\times n$ skew symmetric real matrices $B=[b_{ij}]$ such that $b_{ij}\neq 0$ if and only if $\{i,j\}$ is an edge in $G$; that is, the graph of the matrix $B$, denoted $G(B)$, is equal to $G$. The \emph{maximum skew nullity} of $G$ and \emph{minimum skew rank} of $G$ are defined as follows:
\vspace{-0.1cm}
\begin{align*}
   & M_{-} (G) = \max \{\text{null}(X):  X\in \mathcal{S}_{-}(G)\}\\
   & mr_{-} (G) = \min\{\text{rank}(X): X \in \mathcal{S}_{-}(G)\}.
\end{align*}
The bound provided by $Z_{-}(G)$ is that $M_{-}(G) \leq Z_{-}(G)$, or equivalently, $mr_{-}(G) \geq n-Z_{-}(G)$.

Another variant of the (standard) zero forcing number $Z(G)$ is the \emph{positive semidefinite zero forcing number} $Z_{+}(G)$, which again uses a different color change rule: A vertex $v$ becomes blue if it is a white neighbor of a blue vertex $u$ and no other white neighbor of $u$ is path connected to $v$ via a path of all white vertices.
The minimum cardinality of a set $S$ whose vertices can initially be colored blue and result in the entire graph being colored blue when iteratively applying the positive semidefinite color change rule is referred to as the positive semidefinite zero forcing number of $G$. The positive semidefinite zero forcing number was introduced in \cite{Zplus} as an upper bound for the maximum nullity of a positive semidefinite matrix $B$ whose graph is $G$, which is  denoted by $M_{+}$. That is, let $S_{+}(G)$ denote the set of $n\times n$ positive semidefinite symmetric real matrices $B=[b_{ij}]$ such that $b_{ij}\neq 0$ if and only if $\{i,j\}$ is an edge in $G$; that is, the graph of the matrix $B$, denoted $G(B)$, is equal to $G$.  The \emph{maximum positive semidefinite nullity} of $G$ and the \emph{minimum positive semidefinite rank} of $G$ are defined as follows:
\vspace{-0.1cm}
\begin{align*}
   & M_{+} (G) = \max \{\text{null}(X):  X\in \mathcal{S}_{+}(G)\}\\
   & mr_{+} (G) = \min\{\text{rank}(X): X \in \mathcal{S}_{+}(G)\}.
\end{align*}
The bound provided by $Z_{+}(G)$ is that $M_{+}(G) \leq Z_{+}(G)$, or equivalently, $mr_{+}(G) \geq n-Z_{+}(G)$.
For more background and results on the zero forcing numbers and related parameters, we refer to reader to \cite[Chapters 2 \& 7--10]{bookmrc}.

We denote by $J_{m,n}$ the $m\times n$ all-one matrix, and by $O_{m,n}$ the $m \times n$ all-zero matrix (when size is clear from context, we denote these by $J$ and $O$ respectively). We associate a graph $G$ with a matrix and analyze the eigenvalues of that matrix; these eigenvalues are called the \emph{spectrum} of $G$ with respect to the corresponding associated matrix. Unless otherwise stated, we work with the adjacency matrix and the Laplacian matrix of a graph, which are denoted $A$ and $L$, respectively. Graphs with the same  spectrum are called \emph{cospectral}.

Many of our constructions are supported by numerical experimentation, which use the {\tt Nauty} software~\cite{McKay2013} to identify non-isomorphic cospectral graphs.
For instance,  Figure~\ref{fig:cospectraldifferentZ} provides an example of the smallest pair of cospectral regular graphs with different zero forcing number and positive semidefinite zero forcing number. The graph on the left has zero forcing values of $Z(G)=6$ and $Z_{+}(G)=5$; whereas, the graph on the right has zero forcing values of $Z(G')=4$ and $Z_{+}(G')=4$.
Note that both graphs have a skew zero forcing number equal to $4$.


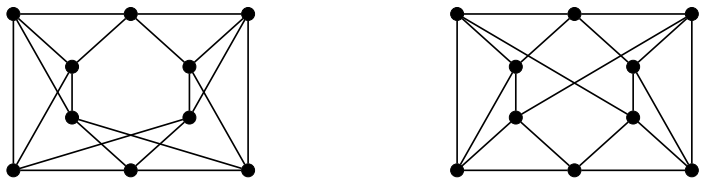
\begin{figure}[ht]
\centering
\begin{tabular}{ccc}
\resizebox{0.2\textwidth}{!}{
\begin{tikzpicture}
	\node[circle,draw=black,fill=black] (1) at (-3,-2) {};
	\node[circle,draw=black,fill=black] (2) at (0,-2) {};
	\node[circle,draw=black,fill=black] (3) at (3,-2) {};
	\node[circle,draw=black,fill=black] (4) at (3,2) {};
	\node[circle,draw=black,fill=black] (5) at (0,2) {};
	\node[circle,draw=black,fill=black] (6) at (-3,2) {};
	
	\node[circle,draw=black,fill=black] (7) at (-1.5,-0.65) {};
	\node[circle,draw=black,fill=black] (8) at (1.5,-0.65) {};
	\node[circle,draw=black,fill=black] (9) at (1.5,0.65) {};
	\node[circle,draw=black,fill=black] (10) at (-1.5,0.65) {};
	
	\foreach \x/\y in {1/2,2/3,3/4,4/5,5/6,6/1}
		\draw[black,=>latex',very thick] (\x) -- (\y);
	\foreach \x/\y in {7/2,7/3,7/6,7/10}
		\draw[black,=>latex',very thick] (\x) -- (\y);
	\foreach \x/\y in {8/1,8/2,8/4,8/9}
		\draw[black,=>latex',very thick] (\x) -- (\y);
	\foreach \x/\y in {9/3,9/4,9/5}
		\draw[black,=>latex',very thick] (\x) -- (\y);
	\foreach \x/\y in {10/1,10/5,10/6}
		\draw[black,=>latex',very thick] (\x) -- (\y);
\end{tikzpicture}%
}
&
\hspace{5em}
&
\resizebox{0.2\textwidth}{!}{
\begin{tikzpicture}
	\node[circle,draw=black,fill=black] (1) at (-3,-2) {};
	\node[circle,draw=black,fill=black] (2) at (0,-2) {};
	\node[circle,draw=black,fill=black] (3) at (3,-2) {};
	\node[circle,draw=black,fill=black] (4) at (3,2) {};
	\node[circle,draw=black,fill=black] (5) at (0,2) {};
	\node[circle,draw=black,fill=black] (6) at (-3,2) {};
	
	\node[circle,draw=black,fill=black] (7) at (-1.5,-0.65) {};
	\node[circle,draw=black,fill=black] (8) at (1.5,-0.65) {};
	\node[circle,draw=black,fill=black] (9) at (1.5,0.65) {};
	\node[circle,draw=black,fill=black] (10) at (-1.5,0.65) {};
	
	\foreach \x/\y in {1/2,2/3,3/4,4/5,5/6,6/1}
		\draw[black,=>latex',very thick] (\x) -- (\y);
	\foreach \x/\y in {7/1,7/2,7/4,7/10}
		\draw[black,=>latex',very thick] (\x) -- (\y);
	\foreach \x/\y in {8/2,8/3,8/6,8/9}
		\draw[black,=>latex',very thick] (\x) -- (\y);
	\foreach \x/\y in {9/3,9/4,9/5}
		\draw[black,=>latex',very thick] (\x) -- (\y);
	\foreach \x/\y in {10/1,10/5,10/6}
		\draw[black,=>latex',very thick] (\x) -- (\y);
\end{tikzpicture}%
}
\end{tabular}
\caption {Smallest pair of connected cospectral regular graphs that have different values for $Z$ and $Z_{+}$ but the same value for $Z_{-}$.}
\label{fig:cospectraldifferentZ}
\end{figure}

A useful method for the construction of non-isomorphic cospectral graphs is described in the following lemma. We use this method in Section~\ref{sec:GMconstruction}.

\begin{lemma}\textup{\cite{GM}[GM switching]}\label{lem:GM}
Let $G$ be a graph and let $\{X_1,\ldots,X_\ell,Y\}$ be a
partition of the vertex set $V(G)$ of $G$.
Suppose that for every vertex $x\in Y$ and every $i\in\{1,\ldots,\ell\}$, $x$ has either $0$,
$\frac{1}{2}|X_i|$ or $|X_i|$ neighbors in $X_i$.
Moreover, suppose that for all $i,j\in\{1,\ldots,\ell\}$ the number of neighbors of an arbitrary vertex of $X_i$ that are contained in $X_j$, depends only on $i$ and $j$ and not on the vertex.
Make a new graph $G'$ from $G$ as follows.
For each $x\in Y$ and $i\in\{1,\ldots,\ell\}$ such that $x$ has $\frac{1}{2}|X_i|$ neighbors
in $X_i$ delete the corresponding $\frac{1}{2}|X_i|$ edges and join $x$ instead to the
$\frac{1}{2}|X_i|$ other vertices in $X_i$.
Then $G$ and $G'$ are cospectral, and their complements are also cospectral.
\end{lemma}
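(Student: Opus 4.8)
The plan is to produce an explicit orthogonal matrix $Q$ that conjugates the adjacency matrix of $G$ into that of $G'$; since conjugation by an invertible matrix preserves the characteristic polynomial, cospectrality will follow immediately. For each $i\in\{1,\dots,\ell\}$ put $m_i=|X_i|$ and $Q_i=\frac{2}{m_i}J-I$, a symmetric $m_i\times m_i$ matrix. A one-line computation, $Q_i^2=\frac{4}{m_i^2}J^2-\frac{4}{m_i}J+I=I$, shows each $Q_i$ is orthogonal. Let $Q$ be the block-diagonal matrix with respect to the ordered partition $(X_1,\dots,X_\ell,Y)$ having diagonal blocks $Q_1,\dots,Q_\ell$ and $I_{|Y|}$; then $Q$ is symmetric and $Q^2=I$, so $Q^{-1}=Q=Q^{\mathsf T}$. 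The goal is to prove $QAQ=A'$, where $A$ and $A'$ are the adjacency matrices of $G$ and $G'$ on the common vertex set.

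The verification is blockwise. Write $A_{ij}$ for the block of $A$ with rows in the $i$-th part and columns in the $j$-th. For $i,j\le\ell$, the second hypothesis says precisely that every row of $A_{ij}$ has the same sum $b_{ij}$; applying it to the pair $(j,i)$ gives constant column sums $b_{ji}$, and counting the edges between the two parts in two ways yields $b_{ij}m_i=b_{ji}m_j$. Using $JA_{ij}=b_{ji}J$, $A_{ij}J=b_{ij}J$ and $JJ=m_jJ$ (sizes understood), the expansion of $Q_iA_{ij}Q_j$ collapses, the surviving $J$-terms being $\frac{2b_{ji}}{m_i}J-\frac{2b_{ij}}{m_j}J$, which vanish exactly because $b_{ij}m_i=b_{ji}m_j$; hence $Q_iA_{ij}Q_j=A_{ij}$. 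The diagonal case $i=j\le\ell$ is the same computation together with the fact that the hypothesis (with $i=j$) forces $G[X_i]$ to be $r_i$-regular, so $A_{ii}J=JA_{ii}=r_iJ$ and again $Q_iA_{ii}Q_i=A_{ii}$. The $Y\times Y$ block is unchanged, since the corresponding block of $Q$ is $I$.

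The essential blocks are the $X_i\times Y$ blocks $A_{iY}$ and their transposes. A column of $A_{iY}$ is the indicator vector $v$ of the neighbours in $X_i$ of some $x\in Y$, and by the first hypothesis $\mathbf 1^{\mathsf T}v\in\{0,\tfrac12 m_i,m_i\}$. Since $Q_iv=\frac{2}{m_i}(\mathbf 1^{\mathsf T}v)\mathbf 1-v$, we get $Q_iv=v$ when the sum is $0$ or $m_i$ and $Q_iv=\mathbf 1-v$ when it is $\tfrac12 m_i$; that is, $Q_i$ replaces the neighbourhood of $x$ in $X_i$ by its complement exactly when $x$ has $\tfrac12 m_i$ neighbours there, which is the switching rule defining $G'$. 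Thus $Q_iA_{iY}$ is the $X_i\times Y$ block of $A(G')$, and since $Q$ is symmetric the whole matrix $QAQ$ is symmetric, so the transposed blocks match as well. Every block of $QAQ$ is then a $0$--$1$ matrix with zero diagonal, so $QAQ$ is a genuine adjacency matrix, namely $A'$; as $A'=Q^{-1}AQ$ is similar to $A$, the graphs $G$ and $G'$ are cospectral. For the complements, note $A(\overline G)=J_n-I_n-A$ (and likewise for $G'$), so it suffices to check $QJ_nQ=J_n$. Each $Q_i$ has all row sums $1$, since $Q_i\mathbf 1=\frac{2}{m_i}m_i\mathbf 1-\mathbf 1=\mathbf 1$; together with the identity block on $Y$ this gives $QJ_n=J_n$ and, transposing, $J_nQ=J_n$, hence $QJ_nQ=J_n$. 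Therefore $Q\,A(\overline G)\,Q=J_n-I_n-A'=A(\overline{G'})$, so $\overline G$ and $\overline{G'}$ are cospectral too. The step I expect to require the most care is the blockwise computation for $i,j\le\ell$: one must see that the hypothesis pins down both row and column sums of $A_{ij}$ and that the cancellation rests on the (always valid) edge-count identity $b_{ij}m_i=b_{ji}m_j$, and one must separately confirm that the conjugate $QAQ$ is a bona fide adjacency matrix and not merely a symmetric integer matrix.
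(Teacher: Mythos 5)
Your proof is correct, and it is exactly the classical argument: the paper itself gives no proof of this lemma, quoting it from Godsil--McKay \cite{GM}, and your conjugation by the block-diagonal orthogonal involution with blocks $\frac{2}{m_i}J-I$ on each $X_i$ and the identity on $Y$ is the standard proof found in that reference. All the key verifications (constant row and column sums of $A_{ij}$ with the edge-count identity $b_{ij}m_i=b_{ji}m_j$, the action on the $X_i\times Y$ columns reproducing the switching rule, and $Q\mathbf{1}=\mathbf{1}$ for the complement statement) are carried out correctly, so there is nothing to add.
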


The operation that changes $G$ into $G'$ is called \emph{Godsil-McKay switching}, and the
considered partition is a
\emph{(Godsil-McKay) switching partition}. In many applications of Godsil-McKay switching, $\ell=1$; in that case, the above condition requires that $X=X_1$
 induces a regular subgraph of
$G$, and that each vertex in
$Y$ has $0$, $\frac{1}{2}|X|$ or
 $|X|$ neighbors in $X$. Such a set $X$ will be called a \emph{(Godsil-McKay)  switching  set}. 

\section{Graph products}\label{sec:produtcconstruction}

In this section, we provide several constructions of infinite families of cospectral graphs with different zero forcing numbers, using graph products.

\subsection{Tensor product}\label{subsec:tensor}

The \emph{tensor product} of two graphs $G=(V,E)$ and $G'=(V',E')$ is defined as 
\[
G\times G' = \left(V\times V',\left\{ \left\{\{v_{i},v_{i}'\},\{v_{j},v_{j}'\}\right\}\colon~\{v_{i},v_{j}\}\in E,~\{v_{i}',v_{j}'\}\in E'\right\}\right). 
\]
Note that if $A$ and $A'$ are the adjacency matrices of $G$ and $G'$, respectively, then the adjacency matrix of the tensor product $G\times G'$ is given by $A\otimes A'$, where $\otimes$ denotes the Kronecker product.
In order to show the main result of this section (Proposition \ref{thm:zeroforcingspectrumlesstrivial}), we need to state several preliminary results.

\begin{lemma}\label{prop:tprod-eigvals} {(\cite{bh}, Section 1.4.7)}
Let $A$ be an $n\times n$ matrix and $A'$ be an $n'\times n'$ matrix.
Also, let $ \left\{\lambda_{1},\ldots,\lambda_{n}\right\}$ denote the spectrum of $A$, and let $\left\{\lambda_{1}',\ldots,\lambda_{n'}'\right\}$ denote the spectrum of $G'$.
Then, $\lambda_{i}\lambda_{j}'$ is an eigenvalue of $A\otimes A'$ for $i=1,\ldots,n$ and $j=1,\ldots,n'$.
\end{lemma}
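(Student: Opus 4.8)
The plan is to exploit the mixed-product property of the Kronecker product, namely that $(P\otimes Q)(R\otimes S)=(PR)\otimes(QS)$ whenever the ordinary products $PR$ and $QS$ are defined. Specializing $R$ and $S$ to be column vectors turns this into the identity $(A\otimes A')(v\otimes w)=(Av)\otimes(A'w)$, which is precisely what makes the eigenvalue computation go through.

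First I would fix indices $i\in\{1,\ldots,n\}$ and $j\in\{1,\ldots,n'\}$. Since $\lambda_i$ lies in the spectrum of $A$, there is a nonzero vector $v$ with $Av=\lambda_i v$; likewise there is a nonzero vector $w$ with $A'w=\lambda_j' w$. Applying the mixed-product identity gives
\[
(A\otimes A')(v\otimes w)=(Av)\otimes(A'w)=(\lambda_i v)\otimes(\lambda_j' w)=\lambda_i\lambda_j'\,(v\otimes w).
\]
It then remains to note that $v\otimes w\neq 0$, since the Kronecker product of two nonzero vectors is nonzero: if $v_p\neq 0$ and $w_q\neq 0$, the entry of $v\otimes w$ indexed by $(p,q)$ equals $v_pw_q\neq 0$. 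Hence $v\otimes w$ is an eigenvector of $A\otimes A'$ with eigenvalue $\lambda_i\lambda_j'$, which is exactly the asserted conclusion.

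There is no real obstacle here; the only step that warrants a moment's care is the mixed-product property itself, which I would either cite as standard or dispatch with a one-line block computation (the $(p,q)$ block of $A\otimes A'$ is $a_{pq}A'$, so the $p$-th block of $(A\otimes A')(v\otimes w)$ is $\sum_q a_{pq}v_q\,A'w=(Av)_p\,A'w$). I would also remark that the statement only claims each product $\lambda_i\lambda_j'$ \emph{is} an eigenvalue; to conclude further that the full multiset of eigenvalues of $A\otimes A'$ equals $\{\lambda_i\lambda_j'\}_{i,j}$ one would need the chosen eigenvectors to span, which holds when $A$ and $A'$ are diagonalizable (in particular for adjacency matrices), but this refinement is not needed in what follows.
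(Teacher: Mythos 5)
Your proof is correct; the paper itself gives no proof of this lemma, simply citing the reference, and your argument (eigenvector tensor product via the mixed-product property, plus the observation that $v\otimes w\neq 0$) is precisely the standard one found in that source. The closing remark correctly distinguishes the stated claim from the stronger full-multiset statement, which indeed is not needed here.
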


We are now ready to state the main result of this section (Proposition \ref{thm:zeroforcingspectrumlesstrivial}), which requires the existence of cospectral graphs like the ones in the following example in order to produce infinite families of cospectral graphs with different zero forcing numbers.

\begin{example}\label{ex:tprod-cospectral}
Let $G$ and $G'$ be the graphs in Figure~\ref{fig:tprod-cospectral}. One can readily verify that $G$ and $G^{'}$ are cospectral graphs with respect to the adjacency matrix, and $Z_{-}(G) = M_{-}(G) = 3$ and $Z_{-}(G^{'}) = M_{-}(G^{'}) = 1$.
The following Proposition~\ref{thm:zeroforcingspectrumlesstrivial} can be used to construct an infinite family of cospectral graphs that have distinct zero forcing and skew-zero forcing numbers. 

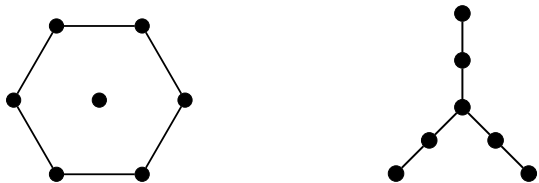
\begin{figure}[ht]
\centering
\begin{tabular}{ccc}
\resizebox{0.15\textwidth}{!}{
\begin{tikzpicture}
	\node[circle,draw=black,fill=black] (1) at (2,0) {};
	\node[circle,draw=black,fill=black] (2) at (1,1.732) {};
	\node[circle,draw=black,fill=black] (3) at (-1,1.732) {};
	\node[circle,draw=black,fill=black] (4) at (-2,0) {};
	\node[circle,draw=black,fill=black] (5) at (-1,-1.732) {};
	\node[circle,draw=black,fill=black] (6) at (1,-1.732) {};
	\node[circle,draw=black,fill=black] (7) at (0,0) {};

	\foreach \x/\y in {1/2,2/3,3/4,4/5,5/6,6/1}
		\draw[black,=>latex',very thick] (\x) -- (\y);
\end{tikzpicture}%
}
&
\hspace{5em}
&
\resizebox{0.12\textwidth}{!}{
\begin{tikzpicture}
	\node[circle,draw=black,fill=black] (1) at (0,0) {};
	\node[circle,draw=black,fill=black] (2) at (0,1) {};
	\node[circle,draw=black,fill=black] (3) at (0,2) {};
	\node[circle,draw=black,fill=black] (4) at (-.707,-.707) {};
	\node[circle,draw=black,fill=black] (5) at (-1.414,-1.414) {};
	\node[circle,draw=black,fill=black] (6) at (.707,-.707) {};
	\node[circle,draw=black,fill=black] (7) at (1.414,-1.414) {};
	
	\foreach \x/\y in {1/2,1/4,1/6,2/3,4/5,6/7}
		\draw[black,=>latex',very thick] (\x) -- (\y);
\end{tikzpicture}%
}
\end{tabular}
\caption {Graphs $G$ and $G'$ from Example \ref{ex:tprod-cospectral}.}
\label{fig:tprod-cospectral}
\end{figure}
\end{example}

\begin{proposition}\label{thm:zeroforcingspectrumlesstrivial}
Let $G_1,G_2$ be two cospectral graphs with respect to the adjacency matrix with order $n$, such that $Z_{-}(G_1)\neq Z_{-} (G_2)$ and $Z_{-}(G_i) = M_{-}(G_i)$ for $i=1,2$. Then there exist infinitely many pairs of cospectral graphs with different zero forcing  and skew-zero forcing numbers.
\end{proposition}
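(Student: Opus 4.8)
The plan is to pass to bipartite double covers and then take disjoint unions. Put $\widetilde G_i := G_i\times K_2$, with its two ``layers'' indexed by $\{1,2\}$. By Lemma~\ref{prop:tprod-eigvals} — or rather by the standard fact that the spectrum of a Kronecker product is the multiset of all pairwise products of the two spectra — $\widetilde G_1$ and $\widetilde G_2$ are cospectral; and since $Z$ and $Z_-$ are additive under disjoint union, every pair $\big(s\,G_1\cup t\,\widetilde G_1,\; s\,G_2\cup t\,\widetilde G_2\big)$ with $s,t\ge 0$ is again cospectral, with zero forcing number $s\,Z(G_i)+t\,Z(\widetilde G_i)$ and skew zero forcing number $s\,Z_-(G_i)+t\,Z_-(\widetilde G_i)$ on its $i$-th member. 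It therefore suffices to pin down $Z$ and $Z_-$ of the bipartite double well enough that both quantities differ for infinitely many admissible $(s,t)$.

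The key computation is $Z_-(\widetilde G_i)=2M_-(G_i)$. For the lower bound, take $B\in\mathcal S_-(G_i)$ with $\operatorname{rank}B=mr_-(G_i)$; since $B^{\top}=-B$, the matrix $\left(\begin{smallmatrix}O&B\\ B&O\end{smallmatrix}\right)$ is skew symmetric, has graph $\widetilde G_i$, and has nullity $2\operatorname{null}B=2M_-(G_i)$, so $M_-(\widetilde G_i)\ge 2M_-(G_i)$ (and $\left(\begin{smallmatrix}O&B\\ -B&O\end{smallmatrix}\right)$, which is symmetric, gives $M(\widetilde G_i)\ge 2M_-(G_i)$ too). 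For the upper bound, a minimum skew zero forcing set $S$ of $G_i$ lifts to $S\times\{1,2\}$ in $\widetilde G_i$: each skew force $u\to v$ in $G_i$ — with $u$ possibly still white — lifts to the two forces $(u,1)\to(v,2)$ and $(u,2)\to(v,1)$, both legal under the skew rule precisely because that rule permits white vertices to force (this is where the skew, rather than the standard, color change rule is essential). Hence $Z_-(\widetilde G_i)\le 2Z_-(G_i)$, and combining this with $M_-\le Z_-$ and the hypothesis $Z_-(G_i)=M_-(G_i)$ forces $Z_-(\widetilde G_i)=M_-(\widetilde G_i)=2M_-(G_i)$. Because $M_-(G_1)\ne M_-(G_2)$, the skew zero forcing numbers $(s+2t)M_-(G_i)$ of the two members of each pair differ whenever $s+t\ge 1$.

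It remains to force the ordinary zero forcing numbers to differ as well, and this is where I expect the real work to be. We already have $Z(\widetilde G_i)\ge M(\widetilde G_i)\ge 2M_-(G_i)$; what is missing is a matching upper bound for $Z(\widetilde G_i)$. This is subtler than in the skew case: $\widetilde G_i$ need not be a tree or unicyclic, and in fact $Z(\widetilde G_i)$ can strictly exceed $2M_-(G_i)$ when $G_i$ has a component on which the empty set is already a skew zero forcing set (for instance a $P_4$: there $Z_-(P_4)=M_-(P_4)=0$, yet $Z(P_4\times K_2)=Z(2P_4)=2$). I would handle this by proving the exact identity $Z(\widetilde G_i)=2M_-(G_i)+2\,c(G_i)$, where $c(G_i)$ counts such components of $G_i$, through an explicit construction of a standard zero forcing set of $\widetilde G_i$ — the two layers of each special component together with the lift of a minimum skew zero forcing set of the remaining components. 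Granting this, the zero forcing numbers of the two members of the pair $\big(s\,G_i\cup t\,\widetilde G_i\big)$ differ iff $s\big(Z(G_1)-Z(G_2)\big)+t\big(Z(\widetilde G_1)-Z(\widetilde G_2)\big)\ne 0$, which holds off at most a single line in $(s,t)$, hence for infinitely many pairs, unless $Z(G_1)=Z(G_2)$ \emph{and} $Z(\widetilde G_1)=Z(\widetilde G_2)$. In that last case the identity turns the second equality into $M_-(G_1)+c(G_1)=M_-(G_2)+c(G_2)$, which together with $M_-(G_1)\ne M_-(G_2)$ must be excluded — or else circumvented by tensoring with $K_3$ in place of $K_2$ — and resolving this is, I think, the one genuinely delicate point of the argument.
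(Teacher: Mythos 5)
Your skew half is correct and nicely argued: the block matrix $\left(\begin{smallmatrix}O&B\\ B&O\end{smallmatrix}\right)$ gives $M_-(G_i\times K_2)\ge 2M_-(G_i)$, the lift of a minimum skew forcing set (legal precisely because white vertices may force under the skew rule) gives $Z_-(G_i\times K_2)\le 2Z_-(G_i)$, and with the hypothesis $Z_-(G_i)=M_-(G_i)$ these sandwich to $Z_-(G_i\times K_2)=2M_-(G_i)$; the disjoint-union and cospectrality bookkeeping is also fine. The genuine gap is the standard zero forcing half, which the proposition requires. The identity $Z(G\times K_2)=2M_-(G)+2c(G)$ is only announced (``I would handle this by proving\ldots Granting this\ldots''): your sketch produces at best an upper bound, no lower-bound argument is offered, and the claim is exactly the delicate point, since the parallel-layers simulation breaks down for the standard rule (a white vertex cannot force), which is why $K_2$ misbehaves on graphs with components skew-forced by the empty set. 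Moreover, even granting the identity, you concede an unresolved residual case ($Z(G_1)=Z(G_2)$ together with $M_-(G_1)+c(G_1)=M_-(G_2)+c(G_2)$), which you propose to ``exclude''; but the hypotheses of the proposition say nothing about $Z(G_1)$ versus $Z(G_2)$ or about $c$, so this case cannot simply be excluded, and ``circumvent by tensoring with $K_3$'' is again an unproved formula. As written, the conclusion that the pairs have different \emph{zero forcing} numbers is not established.

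For comparison, the paper's proof takes the same basic route --- tensoring with complete graphs, with cospectrality preserved by Lemma~\ref{prop:tprod-eigvals} --- but uses $G_i\times K_r$ for general $r$ (the infinitude comes from varying $r$ rather than from taking disjoint unions of copies) and rests on a lemma computing \emph{both} parameters of the product at once: under $Z_-(G_i)=M_-(G_i)$ one has $Z(G_i\times K_r)=Z_-(G_i\times K_r)=(r-2)n+2Z_-(G_i)$. From this, $Z_-(G_1)\neq Z_-(G_2)$ immediately forces both the zero forcing and skew zero forcing numbers of $G_1\times K_r$ and $G_2\times K_r$ to differ, with no case analysis and no assumption on $Z(G_1)$ versus $Z(G_2)$. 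Note that $r=2$, your choice, is precisely the value where such a clean formula fails (your own $P_4$ example: the $(r-2)n$ term vanishes and the troublesome components appear); taking $r\ge 3$ is what sidesteps the problem. To salvage your route you would have to actually prove your formula for $Z(G\times K_2)$ (or the analogous one for $K_3$) and then still dispose of the coincidence case --- in other words, supply exactly the missing product lemma the paper relies on.
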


\begin{proof}
Consider the tensor products $G_1\times K_r$ and $G_2\times K_r$.
First, we observe that Lemma \ref{prop:tprod-eigvals} allows us to preserve the cospectrality when considering the tensor products $G_1\times K_r$ and $G_2\times K_r$.\\
Now, by Lemma \ref{thm:tprod-zf-bound} we know that $Z_{-}(G_i\times K_r) = Z(G_i \times K_r) = (r-2)n + 2Z_{-} (G_i)$ for $i=1,2$, and since we assume $Z_{-}(G_1)\neq Z_{-}(G_2)$, it follows that the tensor graph products $G_1\times K_r$ and $G_2\times K_r$ have a different zero forcing number and a different skew zero forcing number.
\end{proof}

For using Proposition \ref{thm:zeroforcingspectrumlesstrivial}, we need to identify cospectral graphs with distinct skew zero forcing numbers that are equal to their respective skew maximum nullity.
The maximum nullity can be computed via the minimum rank by using the fact that the matrix $A$ has rank $r-1$ if and only if all $r$ minors of $A$ are zero and not all $(r-1)$ minors of $A$ are zero~\cite{Brimkov2019}.
Using Python's SymPy library, we implement Algorithm 1 from~\cite{Brimkov2019}.
In particular, we construct a symbolic matrix $A\in S_{-}(G)$ for a given graph $G$.
Then, for $r=1,\ldots,n$, we collect a list of all $r$ minors of $A$ along with the polynomial
\[
1 - \prod_{a_{ij}\neq 0}a_{ij}.
\]
If the Gr{\"o}bner basis of this list of polynomials does not contain the polynomial $1$, then the matrix $A$ has rank $r-1$.
The code for the minimum rank calculation and the rest of our numerical experiments is available on \href{https://github.com/trcameron/CospectralZF}{GitHub} \cite{github}.

\subsection{Cartesian product}\label{sec:positivesemidefinite}

The \emph{Cartesian product} of two graphs $G$ and $G'$ is denoted by $G \square G'$ and is defined as follows: a vertex $(g,g')$ is adjacent to a vertex $(h,h')$ in $G\square H$ if and only if 
\begin{enumerate}
    \item $g=h$ and $\{g',h'\} \in E(G')$, or
    \item $g' = h'$ and $\{g,h\} \in E(G)$.
\end{enumerate}

The following lemma provides the exact expression of the positive semidefinite zero forcing number and maximum positive semidefinite nullity of $K_r \square K_r$ in terms of $r$. Recall that the \emph{line graph} $L(G)$ of $G$ is the graph with the edge set of $G$ as vertex set, where
two vertices are adjacent if the corresponding edges of $G$ have an endpoint in
common.

\begin{lemma}\label{ZplusofKrsqaure}
Let $r \geq 2$. Then $Z_{+}(K_r \square  K_r) = M_{+}(K_r \square  K_r) = (r-1)^2+1$.
\end{lemma}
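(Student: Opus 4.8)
The plan is to establish the two equalities $M_{+}(K_r \square K_r) \ge (r-1)^2+1$, $Z_{+}(K_r \square K_r) \le (r-1)^2+1$, and $M_{+} \le Z_{+}$ (the last being the general bound stated in the preliminaries), so that all three quantities coincide. The graph $K_r \square K_r$ is the $r \times r$ rook's graph, whose vertices can be identified with cells of an $r \times r$ grid, two cells being adjacent iff they share a row or a column. Equivalently, $K_r \square K_r$ is the line graph $L(K_{r,r})$, and I would use this identification to bring in known minimum-rank results for line graphs of complete bipartite graphs.

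First I would prove the upper bound $Z_{+}(K_r \square K_r) \le (r-1)^2+1$ by exhibiting an explicit positive semidefinite zero forcing set of that size. The natural candidate is the union of one full row and one full column of the grid, which has $2r-1$ vertices; but that is far from $(r-1)^2+1$, so instead I expect the extremal set to be the complement of a ``near-diagonal'' structure: take $B$ to be all cells except those in positions $(i,i)$ for $i = 2,\dots,r$ together with... — more carefully, the right count $(r-1)^2+1 = n - (2r-2)$ with $n=r^2$ suggests the white set has size $2r-2$, which is exactly the number of cells in the union of the first row and first column minus one. So I would try $W = (\text{row }1 \cup \text{column }1) \setminus \{(1,1)\}$ as the white set and $B = V \setminus W$ as the initial blue set, then verify that under the PSD color change rule the white set, which induces two disjoint cliques $K_{r-1}$ (the rest of row~$1$ and the rest of column~$1$) joined only through the already-blue corner, gets forced: each blue vertex in row~$i \ge 2$ sees the single white vertex $(i,1)$ in its own column-path-component, forcing column~$1$; symmetrically row~$1$ is forced. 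I would check the path-component condition of the PSD rule holds at each step.

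For the lower bound $M_{+}(K_r \square K_r) \ge (r-1)^2+1$, equivalently $mr_{+}(K_r \square K_r) \le 2r-2$, I would construct an explicit positive semidefinite matrix in $\mathcal{S}_{+}(K_r \square K_r)$ of rank $2r-2$. Index rows/columns of the grid by $(a,b)$ with $a,b \in \{1,\dots,r\}$ and take vectors $u_{(a,b)} = e_a \oplus (-e_b) \in \mathbb{R}^r \oplus \mathbb{R}^r$ restricted to a suitable $(2r-2)$-dimensional subspace (the relations $\sum_a e_a$ and $\sum_b e_b$ being quotiented out, since $\langle e_a - e_{a'},\, e_b - e_{b'}\rangle$ patterns are what matter); then the Gram matrix $X = [\langle u_{(a,b)}, u_{(a',b')}\rangle]$ has $X_{(a,b),(a',b')} \ne 0$ precisely when $a = a'$ xor $b = b'$ — i.e.\ exactly the rook adjacency — provided the parameters are chosen generically, and $\operatorname{rank}(X) \le 2r-2$. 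This is the known fact that $mr_{+}(L(K_{r,r})) = mr(L(K_{r,r})) = 2r-2$ for $r$ not too small; I would either cite it or include the short Gram-matrix verification.

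The main obstacle I anticipate is the zero-forcing upper bound: one must be careful that the PSD color change rule's path-connectivity condition is genuinely satisfied at every step for the proposed blue set, and that no smaller set works is not needed (we only need $Z_{+} \le (r-1)^2+1$, since the matching lower bound comes for free from $M_{+} \le Z_{+}$ once $M_{+} \ge (r-1)^2+1$ is shown). A secondary subtlety is handling small $r$ (e.g.\ $r=2$, where $K_2 \square K_2 = C_4$ and $(r-1)^2+1 = 2 = Z_{+}(C_4)$, which checks out), so I would either verify the base case directly or note that the constructions above already cover all $r \ge 2$.
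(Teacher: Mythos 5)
Your overall skeleton --- prove $Z_{+}(K_r\square K_r)\le (r-1)^2+1$ by exhibiting a PSD forcing set, prove $M_{+}(K_r\square K_r)\ge (r-1)^2+1$ by exhibiting a PSD matrix in $\mathcal{S}_{+}(K_r\square K_r)$ of rank $2r-2$, and sandwich via $M_{+}\le Z_{+}$ --- is sound, and the forcing half is correct: with white set $W=(\text{row }1\cup\text{column }1)\setminus\{(1,1)\}$ the white-induced subgraph is two disjoint copies of $K_{r-1}$, and every blue vertex $(a,b)$ with $a,b\ge 2$ has exactly one white neighbour in each component, so everything is forced in one round. Note this is already a different route from the paper, whose entire proof is the identification $K_r\square K_r=L(K_{r,r})$ together with a citation of \cite[Theorem 4.5]{Linegraph}, which returns both $Z_{+}$ and $M_{+}$ equal to $|E(K_{r,r})|-|V(K_{r,r})|+2$ in one stroke, with no forcing set or matrix to construct.

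The genuine gap is in your matrix half. The unprojected vectors $e_a\oplus(-e_b)$ have Gram matrix $A+2I$, whose rank is $2r-1$ (the rook's graph has eigenvalue $-2$ with multiplicity exactly $(r-1)^2$), so they only give $M_{+}\ge (r-1)^2$ --- one short; the whole content of the lemma is that extra $+1$. Your proposed repair, projecting away $(\mathbf{1},0)$ and $(0,\mathbf{1})$, destroys the zero pattern: the inner products become $([a=a']-\tfrac1r)+([b=b']-\tfrac1r)$, so every \emph{non}-adjacent pair receives the entry $-2/r\neq 0$ and the Gram matrix is not in $\mathcal{S}_{+}(K_r\square K_r)$. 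Genericity cannot help here, because membership in $\mathcal{S}_{+}(G)$ demands exact zeros at all non-edges, a non-generic (codimension) condition; the construction must be tuned, not perturbed. A correct explicit choice in $\mathbb{R}^{r-1}\oplus\mathbb{R}^{r-1}$: take $w_{(a,b)}=x_a\oplus y_b$, where the $y_b$ realize the simplex Gram matrix with all off-diagonal entries $-c<0$ (rank $r-1$), and the $x_a$ realize the PSD rank-$(r-1)$ matrix $cJ+\mathrm{diag}(-\delta,d,\dots,d)$ with $d>0$ and $\delta=cd/(c(r-1)+d)$; then non-adjacent entries are $c-c=0$, same-row entries are $d$ or $-\delta$, same-column entries are $cr$, and the rank is at most $2r-2$, giving $mr_{+}\le 2r-2$ for every $r\ge2$ (not just ``$r$ not too small''). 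Your alternative fallback --- citing the known value of $mr_{+}(L(K_{r,r}))$ --- is of course legitimate, but then you are essentially back to the paper's citation-based proof, and the forcing-set computation becomes redundant.
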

\begin{proof}

Let us denote the line graph of the complete bipartite graph $K_{r,r}$ by $L(K_{r,r})$. Note that $L(K_{r,r}) = K_r \square K_r$. By using \cite[Theorem 4.5]{Linegraph}, we have
\begin{align*}
Z_+(L(K_{r,r})) = M_+(L(K_{r,r})) &= |E(K_{r,r})| - |V(K_{r,r})| + 2\\
&= r^2 - 2r + 2 = (r-1)^2 + 1.
\end{align*}
\end{proof}

\begin{proposition}\label{propo:grdshrikande}
Let $r\geq 11$. Let $G_1$ be the $4\times 4$ grid (left of Figure~\ref{fig:grdshrikande}), and let $G_2$ be the graph obtained by switching $G_1$ with respect to a 4-coclique (right of Figure \ref{fig:grdshrikande}). Then  $Z_+(G_1 \square K_r) \neq Z_+(G_2 \square K_r)$. 
\end{proposition}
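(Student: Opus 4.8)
The plan is to sandwich the two positive semidefinite zero forcing numbers between linear functions of $r$ with different slopes. I read $G_1$ as the $4\times4$ rook's graph $K_4\square K_4=L(K_{4,4})$, so that $G_2$, obtained by Godsil--McKay switching of $G_1$ with respect to a $4$-coclique (necessarily a transversal), is the Shrikhande graph; by Lemma~\ref{lem:GM} the two graphs are cospectral, and hence so are $G_1\square K_r$ and $G_2\square K_r$. I claim that
\[
Z_+(G_1\square K_r)\ \ge\ 10(r-1)\qquad\text{and}\qquad Z_+(G_2\square K_r)\ \le\ 9r ,
\]
which yields the proposition immediately, since $10(r-1)>9r$ holds precisely when $r\ge 11$.

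For the lower bound I would combine $Z_+(H)\ge M_+(H)$ with the inequality $M_+(H\square K)\ge M_+(H)\,M_+(K)$: if $A$ and $B$ are positive semidefinite matrices realizing the maximum positive semidefinite nullities of $H$ and $K$, then $A\otimes I+I\otimes B$ lies in $\mathcal{S}_{+}(H\square K)$, is positive semidefinite, and has nullity $\operatorname{null}(A)\,\operatorname{null}(B)$, because every eigenvalue of $A$ and of $B$ is nonnegative and so $\alpha_i+\beta_j=0$ forces $\alpha_i=\beta_j=0$. Taking $H=G_1$ and $K=K_r$, and inserting $M_+(G_1)=M_+(K_4\square K_4)=(4-1)^2+1=10$ from Lemma~\ref{ZplusofKrsqaure} together with $M_+(K_r)=r-1$ (witnessed by the all-ones matrix, which has rank $1$ and lies in $\mathcal{S}_{+}(K_r)$), gives $Z_+(G_1\square K_r)\ge M_+(G_1\square K_r)\ge 10(r-1)$.

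For the upper bound I would first prove the general estimate $Z_+(G\square K_r)\le r\,Z_+(G)$: if $S$ is a minimum positive semidefinite zero forcing set of $G$, then $S\times V(K_r)$ is a positive semidefinite zero forcing set of $G\square K_r$. Indeed, at any stage of the process on $G$ with blue set $\mathcal B$, the white part of $G\square K_r$ under the blue set $\mathcal B\times V(K_r)$ is precisely $G[V(G)\setminus\mathcal B]\,\square\,K_r$, whose connected components are the products of the white components of $G[V(G)\setminus\mathcal B]$ with $K_r$; hence if $u$ forces $v$ in $G$, then for each $i$ the vertex $(u,i)$ has $(v,i)$ as its unique white neighbour in the corresponding component of $G\square K_r$ and forces it, and iterating colors everything blue. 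It then remains to evaluate $Z_+(G_2)=Z_+(\mathrm{Shrikhande})$. Since the least eigenvalue $-2$ of the adjacency matrix $A$ of the Shrikhande graph has multiplicity $9$, the matrix $A+2I\in\mathcal{S}_{+}(G_2)$ is positive semidefinite with nullity $9$, so $Z_+(G_2)\ge M_+(G_2)\ge 9$; and $Z_+(G_2)\le 9$ would be established by exhibiting an explicit set of $9$ vertices of the Shrikhande graph that is a positive semidefinite zero forcing set (verifiable directly or with the code accompanying the paper). This gives $Z_+(G_2\square K_r)\le 9r$.

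Combining the two estimates, for $r\ge 11$ we obtain $Z_+(G_1\square K_r)\ge 10(r-1)=10r-10>9r\ge Z_+(G_2\square K_r)$, so in particular $Z_+(G_1\square K_r)\ne Z_+(G_2\square K_r)$. The main obstacle I anticipate is the exact determination $Z_+(\mathrm{Shrikhande})=9$ --- producing the explicit nine-vertex forcing set and ruling out any set of size $8$ --- while the two product inequalities for $Z_+$, although routine, require care with the component structure of $G[W]\square K_r$ and with the nullity count for the Kronecker sum. Reassuringly, the hypothesis $r\ge 11$ is exactly what the slopes $10$ and $9$ force, which supports this reading of $G_1$ and $G_2$.
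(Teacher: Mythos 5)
Your proof is correct and follows the same overall strategy as the paper: both sandwich the two quantities via $Z_+(G_1\square K_r)\ge Z_+(G_1)Z_+(K_r)=10(r-1)$ and $Z_+(G_2\square K_r)\le rZ_+(G_2)=9r$, and conclude from $10(r-1)>9r$ for $r\ge 11$; your identification of $G_1$ as the rook's graph $L(K_{4,4})=K_4\square K_4$ and $G_2$ as the Shrikhande graph, and the use of Lemma~\ref{ZplusofKrsqaure} for $M_+(G_1)=10$, match the paper exactly. The difference is that the paper simply cites \cite[Proposition 9.57, Corollary 9.60, Theorem 9.47]{bookmrc} for the two product inequalities and for $Z_+(K_r)=r-1$, whereas you prove them from first principles: the lower bound via the Kronecker sum $A\otimes I+I\otimes B$, whose nullity is $\operatorname{null}(A)\operatorname{null}(B)$ precisely because positive semidefiniteness forces $\alpha_i+\beta_j=0$ to split, and the upper bound via the observation that $S\times V(K_r)$ is a positive semidefinite forcing set since components of the white subgraph are $C\square K_r$; both arguments are sound, and this buys a self-contained proof at the cost of some length, which is exactly what the citations were avoiding. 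The one point where you (like the paper) lean on computation is $Z_+(G_2)\le 9$: the paper verifies $Z_+(G_2)=9$ by SageMath, and you defer the explicit nine-vertex positive semidefinite forcing set to direct or computational verification; note that only the upper bound $Z_+(G_2)\le 9$ is actually needed, so your eigenvalue argument giving $M_+(G_2)\ge 9$ (multiplicity of $-2$ in the Shrikhande spectrum), while correct, is dispensable for the proposition.
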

\begin{proof}
First, observe that $G_1$ ($4\times 4$ grid or lattice graph $L_2(4)$) and $G_2$ (Shrikhande graph) are two  cospectral and non-isomorphic graphs \cite{bh}. By Lemma \ref{ZplusofKrsqaure}, we have that $Z_+(G_1) = M_+(G_1) = 10$, and one  can easily check by SageMath that $Z_+(G_2) = 9$. It is known that $Z_{+}(K_r) = M_+(K_r) = r-1$ \cite[Theorem 9.47-(2)]{bookmrc}.

By using \cite[Proposition 9.57]{bookmrc}, we have that

\begin{align*}
Z_+(G_2 \square K_r) \leq \min\{rZ_{+}(G_2), 16Z_{+}(K_r)\} = \min\{9r,16r-16\} = 9r.
\end{align*}
By using \cite[Corollary 9.60]{bookmrc}, we have that
\begin{align*}
Z_{+}(G_1 \square K_r) \geq Z_{+}(G_1)Z_+(K_r) = 10(r-1). 
\end{align*}
Thus, since $r\geq 11$, it holds that $Z_+(G_2 \square K_r) < Z_{+}(G_1 \square K_r)$.
\end{proof}

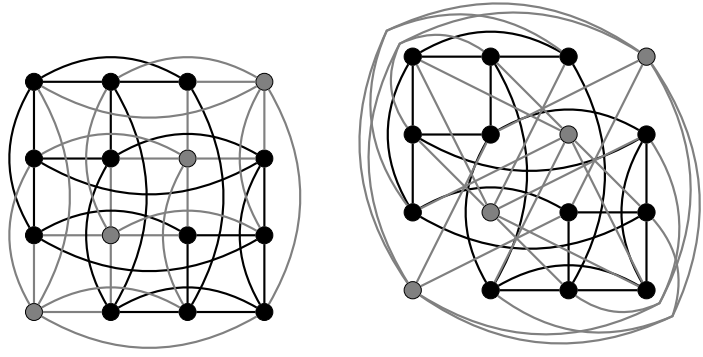
\begin{figure}
\centering
\begin{tabular}{cc}
\resizebox{0.25\textwidth}{!}{
\begin{tikzpicture} 
	\node[circle,draw=black,fill=black] (1) at (-3,3) {};
	\node[circle,draw=black,fill=black] (2) at (-1.5,3) {};
	\node[circle,draw=black,fill=black] (3) at (0,3) {};
	\node[circle,draw=black,fill=gray] (4) at (1.5,3) {};
	
	\node[circle,draw=black,fill=black] (5) at (-3,1.5) {};
	\node[circle,draw=black,fill=black] (6) at (-1.5,1.5) {};
	\node[circle,draw=black,fill=gray] (7) at (0,1.5) {};
	\node[circle,draw=black,fill=black] (8) at (1.5,1.5) {};
	
	\node[circle,draw=black,fill=black] (9) at (-3,0) {};
	\node[circle,draw=black,fill=gray] (10) at (-1.5,0) {};
	\node[circle,draw=black,fill=black] (11) at (0,0) {};
	\node[circle,draw=black,fill=black] (12) at (1.5,0) {};
	
	\node[circle,draw=black,fill=gray] (13) at (-3,-1.5) {};
	\node[circle,draw=black,fill=black] (14) at (-1.5,-1.5) {};
	\node[circle,draw=black,fill=black] (15) at (0,-1.5) {};
	\node[circle,draw=black,fill=black] (16) at (1.5,-1.5) {};
	
	\foreach \x/\y in {1/2,2/3,1/5,2/6,5/6,5/9,8/12,11/12,11/15,12/16,14/15,15/16}
		\draw[black,=>latex',very thick] (\x) -- (\y);
	\foreach \x/\y in {3/4,3/7,4/8,6/7,6/10,7/8,7/11,9/10,9/13,10/11,10/14,13/14}
		\draw[gray,=>latex',very thick] (\x) -- (\y);
	\foreach \x/\y in {1/3,9/1,3/15,2/14,8/5,6/8,14/6,16/8,12/9,9/11,14/16}
		\draw[black,=>latex',very thick] (\x) to [bend left,looseness=1] (\y);
	\foreach \x/\y in {4/1,2/4,1/13,10/2,12/4,4/16,5/7,13/5,15/7,10/12,13/15,16/13}
		\draw[gray,=>latex',very thick] (\x) to [bend left,looseness=1] (\y);
\end{tikzpicture}%
}
&
\resizebox{0.3\textwidth}{!}{
\begin{tikzpicture}
	\node[circle,draw=black,fill=black] (1) at (-3,3) {};
	\node[circle,draw=black,fill=black] (2) at (-1.5,3) {};
	\node[circle,draw=black,fill=black] (3) at (0,3) {};
	\node[circle,draw=black,fill=gray] (4) at (1.5,3) {};
	
	\node[circle,draw=black,fill=black] (5) at (-3,1.5) {};
	\node[circle,draw=black,fill=black] (6) at (-1.5,1.5) {};
	\node[circle,draw=black,fill=gray] (7) at (0,1.5) {};
	\node[circle,draw=black,fill=black] (8) at (1.5,1.5) {};
	
	\node[circle,draw=black,fill=black] (9) at (-3,0) {};
	\node[circle,draw=black,fill=gray] (10) at (-1.5,0) {};
	\node[circle,draw=black,fill=black] (11) at (0,0) {};
	\node[circle,draw=black,fill=black] (12) at (1.5,0) {};
	
	\node[circle,draw=black,fill=gray] (13) at (-3,-1.5) {};
	\node[circle,draw=black,fill=black] (14) at (-1.5,-1.5) {};
	\node[circle,draw=black,fill=black] (15) at (0,-1.5) {};
	\node[circle,draw=black,fill=black] (16) at (1.5,-1.5) {};
	
	\foreach \x/\y in {1/2,2/3,1/5,2/6,5/6,5/9,8/12,11/12,11/15,12/16,14/15,15/16}
		\draw[black,=>latex',very thick] (\x) -- (\y);
	\foreach \x/\y in {1/3,9/1,3/15,2/14,8/5,6/8,14/6,16/8,12/9,9/11,14/16}
		\draw[black,=>latex',very thick] (\x) to [bend left,looseness=1] (\y);
		
	\foreach \x/\y in {4/6,4/11,1/7,2/7,7/12,7/16,13/6,13/11,15/10,16/10,10/5,10/1,10/3,10/8,7/9,7/14}
		\draw[gray,=>latex',very thick] (\x) -- (\y);
		
	\draw[gray,=>latex',very thick,bend left,looseness=1] (4) to (2.0,-2.0) to (14);
	\draw[gray,=>latex',very thick,bend left,looseness=1] (4) to (1.75,-1.75) to (15);
	\draw[gray,=>latex',very thick,bend right,looseness=1] (13) to (1.75,-1.75) to (8);
	\draw[gray,=>latex',very thick,bend right,looseness=1] (13) to (2.0,-2.0) to (12);
	
	\draw[gray,=>latex',very thick,bend right,looseness=1] (4) to (-3.25,3.25) to (5);
	\draw[gray,=>latex',very thick,bend right,looseness=1] (4) to (-3.5,3.5) to (9);
	\draw[gray,=>latex',very thick,bend left,looseness=1] (13) to (-3.25,3.25) to (2);
	\draw[gray,=>latex',very thick,bend left,looseness=1] (13) to (-3.5,3.5) to (3);
\end{tikzpicture}%
}
\end{tabular}
\caption{The graph $G_{1}$ (left) and $G_{2}$ (right) from Proposition~\ref{propo:grdshrikande}}
\label{fig:grdshrikande}
\end{figure}

\section{Graph join }\label{sec:join}

The \emph{join} of two graphs $G$ and $H$, denoted by $G \vee H$, is their disjoint union with all edges of the form $\{u,v\}$ added, where $u\in V (G)$ and $v\in V (H)$. In this section we provide  constructions of cospectral graphs with different zero forcing numbers using graph joins. We begin with a preliminary result regarding the Laplacian spectrum of the join of two graphs. 

\begin{lemma}\label{lemma:fiedler-lap}\cite[Corollary 3.7]{m1991}
Let $L$ and $L'$ denote the Laplacian matrices of two graphs $G$ and $G'$, respectively.
Also, let $\lambda_{1}\leq\lambda_{2}\leq\cdots\leq\lambda_{n}$ denote the eigenvalues of $L$ and $\lambda_{1}'\leq\lambda_{2}'\leq\cdots\leq\lambda_{n'}'$ denote the eigenvalues of $L'$.
Then, the Laplacian matrix of $G\vee G'$ has eigenvalues $\lambda_{2}+n',\ldots,\lambda_{n}+n',\lambda_{2}'+n,\ldots,\lambda_{n'}'+n,n+n',0$.
\end{lemma}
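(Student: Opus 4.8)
The plan is to prove this by a direct eigenvector-chasing argument, combined with an edge-counting identity to pin down the one remaining eigenvalue. Write $n = |V(G)|$ and $n' = |V(G')|$, and let $H = G \vee G'$, so $H$ has $N = n + n'$ vertices. The Laplacian of $H$ has the block form
\begin{equation*}
L_H = \begin{pmatrix} L + n' I_n & -J_{n,n'} \\ -J_{n',n} & L' + n I_{n'} \end{pmatrix},
\end{equation*}
since each vertex of $G$ gains $n'$ new neighbors (all of $V(G')$) and vice versa, and the new edges contribute the all-ones off-diagonal blocks. The strategy is to exhibit $N$ linearly independent eigenvectors of $L_H$ with the claimed eigenvalues and then check the count.

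First I would handle the eigenvectors coming from $G$. Let $v$ be an eigenvector of $L$ with eigenvalue $\lambda_i$ for $i \geq 2$; then $v \perp \mathbf{1}_n$ because the all-ones vector spans the $\lambda_1 = 0$ eigenspace (here one uses connectivity only to know the multiplicity of $0$ is the number of components, but more simply: the eigenvectors for $\lambda_2,\dots,\lambda_n$ may be chosen orthogonal to $\mathbf{1}_n$). Extend $v$ to $\tilde v = (v; \mathbf{0}_{n'})$. Then the top block gives $(L + n'I)v = (\lambda_i + n')v$, and the bottom block gives $-J_{n',n} v = \mathbf{0}$ because $v \perp \mathbf{1}_n$. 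Hence $\tilde v$ is an eigenvector of $L_H$ with eigenvalue $\lambda_i + n'$. This yields the $n-1$ eigenvalues $\lambda_2 + n', \dots, \lambda_n + n'$. Symmetrically, padding the eigenvectors of $L'$ for $\lambda_2', \dots, \lambda_{n'}'$ with zeros on top yields the $n'-1$ eigenvalues $\lambda_2' + n, \dots, \lambda_{n'}' + n$. Next, the vector $(a\mathbf{1}_n; b\mathbf{1}_{n'})$: applying $L_H$ gives top entry $(n' a - n' b)\mathbf{1}_n$ and bottom entry $(n b - n a)\mathbf{1}_{n'}$. Taking $a = b$ gives the zero eigenvalue (this is the global $\mathbf{1}_N$). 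Taking $(a,b)$ in the orthogonal direction within this 2-dimensional space, one solves the $2\times 2$ system; the trace of the restricted operator is $n' + n$ and one eigenvalue is $0$, so the other is $n + n'$. (Concretely $a = n'$, $b = -n$ works: top gives $n'(n' + n)a^{-1}\cdots$ — I'd just verify the $2\times2$ eigenvalues are $0$ and $n+n'$.)

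Finally I would verify these $N$ vectors are linearly independent and hence form a complete eigenbasis: the $G$-block vectors live in $\{(v;\mathbf 0): v\perp \mathbf 1_n\}$, the $G'$-block vectors in $\{(\mathbf 0; w): w \perp \mathbf 1_{n'}\}$, and the last two in $\mathrm{span}\{(\mathbf 1_n; \mathbf 0),(\mathbf 0;\mathbf 1_{n'})\}$; these three subspaces are mutually orthogonal and have dimensions $(n-1) + (n'-1) + 2 = N$. So we have found all eigenvalues, matching the stated list. I do not expect a serious obstacle here; the only mildly delicate point is the correct bookkeeping of which eigenvalues get which shift and confirming that the two "mixed" eigenvalues are exactly $0$ and $n + n'$ rather than something else — this is why I would write out the $2\times 2$ block explicitly rather than wave at it. Since the result is quoted from \cite{m1991} as Corollary 3.7, presumably the authors simply cite it; but if a proof is wanted, the above eigenvector argument is the cleanest route.
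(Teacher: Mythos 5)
Your proposal is correct. Note, however, that the paper does not prove this lemma at all: it is quoted directly from Mohar's survey (cited as Corollary 3.7 of \cite{m1991}), so there is no in-paper argument to compare against. Your block-decomposition proof is the standard self-contained route and all the steps check out: the block form of the Laplacian of the join is right, padding eigenvectors of $L$ (chosen orthogonal to $\mathbf{1}_n$, which requires no connectivity assumption) with zeros gives the shifted eigenvalues $\lambda_i+n'$ because $J_{n',n}v=\mathbf{0}$, and the invariant two-dimensional space spanned by $(\mathbf{1}_n;\mathbf{0})$ and $(\mathbf{0};\mathbf{1}_{n'})$ carries the restricted matrix $\left(\begin{smallmatrix} n' & -n' \\ -n & n\end{smallmatrix}\right)$, whose trace is $n+n'$ and determinant is $0$, giving the eigenvalues $0$ and $n+n'$ (with $(n'\mathbf{1}_n;-n\mathbf{1}_{n'})$ for the latter — your parenthetical computation is garbled but the claim is true). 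The orthogonality/dimension count $(n-1)+(n'-1)+2=n+n'$ correctly certifies that you have the full spectrum. An alternative, and the way such results are often derived, is via complements: the complement of $G\vee G'$ is the disjoint union $\overline{G}\cup\overline{G'}$, and the Laplacian eigenvalues of a complement on $N$ vertices are $0$ together with $N-\mu$ for the nonzero-indexed eigenvalues $\mu$ of the original; your direct eigenvector argument avoids that detour at essentially no cost.
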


Note that Lemma \ref{lemma:fiedler-lap} also provides the spectrum of the adjacency matrix associated with the join of two regular graphs:

\begin{remark}\label{cor:fiedler-adj}
Let $G$ and $G'$ be regular graphs of degree $r$ and $r'$, respectively. 
Also, let $A$ and $A'$ denote the adjacency matrices of $G$ and $G'$, respectively, with corresponding spectrum $\sigma(A) = \left\{\lambda_{1},\ldots,\lambda_{n}\right\}$ and $\sigma(A') = \left\{\lambda_{1}',\ldots,\lambda_{n'}'\right\}$.
Then, the adjacency matrix of $G\vee G'$ has eigenvalues $$\lambda_{2},\ldots,\lambda_{n},\lambda_{2}',\ldots,\lambda_{n'}',\gamma_{1},\gamma_{2},$$ where $\gamma_{1},\gamma_{2}$ are the roots of the polynomial $\lambda^{2}-\lambda(r+r')+(rr'-nn')$.
\end{remark}

The zero forcing number of the join of two graphs based on the zero forcing number of each of the initial graphs and their orders is shown in \cite[Theorem 5.3.1]{fallatstudentphd}, and stated below. For completeness, we include its proof, and  we show that this result can also be extended to the skew zero forcing number. 
 
\begin{lemma}\label{prop:zf-join}
Let $G=(V,E)$ and $G'=(V',E')$ be connected graphs.
Then, the zero forcing number of the join $G\vee G'$ satisfies 
\[
Z(G\vee G') = \min\left\{n + Z(G'),n'+Z(G)\right\}.
\]
Moreover, the skew-zero forcing number satisfies
\[
Z_{-}(G\vee G') = \min\left\{n + Z_{-}(G'),n'+Z_{-}(G)\right\}.
\]
\end{lemma}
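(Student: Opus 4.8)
The plan is to prove both identities in parallel, since the skew case follows the standard case almost verbatim once one tracks where the color change rules differ. I would first establish the upper bound $Z(G \vee G') \le \min\{n + Z(G'), n' + Z(G)\}$ by exhibiting an explicit zero forcing set. Take a minimum zero forcing set $S'$ of $G'$ together with \emph{all} of $V(G)$; I claim $V(G) \cup S'$ is a zero forcing set of $G \vee G'$ of size $n + Z(G')$. The idea is that inside the join, every vertex of $G'$ is adjacent to every vertex of $G$, so once all of $V(G)$ is blue, a vertex $u \in V(G)$ can force only if it has exactly one white neighbor overall; this eventually lets the all-blue set $V(G)$ behave, from the point of view of $G'$, like an external engine that cannot do anything until $G'$ has been whittled down to a single white vertex. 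More carefully: first run the forcing process of $G'$ using $S'$ entirely within the copy of $G'$ (a blue vertex of $G'$ with a unique white $G'$-neighbor still has a unique white neighbor in $G \vee G'$ because $V(G)$ is already blue), which turns all of $V(G')$ blue. So $V(G) \cup S'$ forces everything. Symmetrically $V(G') \cup S$ works with $|S| = Z(G)$, giving the upper bound. For the skew version the same set works: a vertex of $G'$ that is the unique white neighbor (in $G'$) of \emph{some} vertex is still the unique white neighbor of that vertex in the join once $V(G)$ is blue, so the skew forcing process of $G'$ runs undisturbed.

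The substance is the lower bound $Z(G \vee G') \ge \min\{n + Z(G'), n' + Z(G)\}$. Let $B$ be a minimum zero forcing set of $H := G \vee G'$, and write $a = |B \cap V(G)|$, $b = |B \cap V(G')|$. The key structural observation is that as long as there are at least two white vertices in $V(G)$ \emph{and} at least two white vertices in $V(G')$, \emph{no force can ever happen}: any vertex $v$ has among its neighbors all of the opposite side, so if the opposite side has $\ge 2$ white vertices then $v$ cannot have a unique white neighbor (and in the skew case the same: no vertex can have a unique white neighbor). Hence at the first force, one of the two sides — say $V(G')$ — must already be entirely blue except for at most one vertex. That forces $b \ge n' - 1$. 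Now consider two cases. If $b = n'$, then $|B| \ge n' \ge \min\{n + Z(G'), n'+Z(G)\}$ would need $a \ge$ something; better: I would argue that once $V(G')$ is fully blue, forcing $V(G)$ within $H$ using $B \cap V(G)$ as a head start is essentially a zero forcing process on $G$ where each vertex of $V(G)$ has the extra all-blue neighborhood $V(G')$ attached. The point is that a vertex $u \in V(G)$ can force a white $w \in V(G)$ only when $w$ is its unique white neighbor in $H$; since $V(G')$ is all blue, that is exactly the condition "$w$ is the unique white $G$-neighbor of $u$" — \emph{except} that a blue vertex $z \in V(G')$ could also force a lone white vertex of $V(G)$. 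So effectively $B \cap V(G)$ together with "one extra universal forcer from $G'$" must zero-force $G$; this gives $a \ge Z(G) - 1$ in the worst case, hence $|B| = a + b \ge (Z(G)-1) + n' = n' + Z(G) - 1$. The apparent off-by-one has to be cleaned up, and this is where the argument needs care; I expect the resolution is that when exactly one vertex of $V(G')$ is white at the relevant moment, the lone white vertex of $V(G)$ (if any) can be forced "for free" by a blue $G'$-vertex, but one must check the timing so that the bound comes out to $n' + Z(G)$ exactly, matching the upper bound. Symmetrically, if instead $V(G)$ is the side that gets emptied first, we get $|B| \ge n + Z(G')$.

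So the logical skeleton is: (1) upper bound by explicit forcing sets, both cases; (2) "no force until one side is all-but-one blue" lemma, which is the engine and uses only the completeness of the bipartite part of the join; (3) case analysis on which side empties first, reducing the remaining forcing to an (almost) ordinary forcing process on $G$ or $G'$; (4) reconcile the constant so the two bounds meet. The main obstacle is step (4): correctly accounting for the "one free force from the opposite side" so that the lower bound is exactly $\min\{n+Z(G'),\,n'+Z(G)\}$ and not one less. For the skew statement, every step transfers directly — the no-force lemma is identical because it only needs two white vertices on each side to block \emph{any} vertex from having a unique white neighbor, the explicit sets are the same, and the reduction in step (3) uses the skew color change rule on $G$ (respectively $G'$) with the same "one extra universal forcer" caveat — so I would present the standard case in full and then remark that replacing $Z$ by $Z_-$ and "blue vertex with a unique white neighbor" by "vertex with a unique white neighbor" throughout yields the skew identity.
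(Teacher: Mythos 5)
Your upper bound and your key structural observation (no force is possible while both sides of the join still have at least two white vertices) are correct, and they match the engine behind the paper's argument; but the lower bound, which is the substance of the statement, is left open at exactly the decisive point, and the resolution you gesture at is not the right one. Two things are missing. First, in the case where one side, say $V'$, is entirely blue, the ``one extra universal forcer from $G'$'' never buys anything: a vertex of $V'$ has all of $V$ among its neighbors, so it can force into $V$ only at the moment when a single white vertex $w$ of $V$ remains, and at that same moment any $G$-neighbor of $w$ (which exists and is blue, since $G$ is connected on at least two vertices) already performs that force under the ordinary rule. Hence $B\cap V$ must itself be a zero forcing set of $G$, giving $a\ge Z(G)$ and $|B|\ge n'+Z(G)$ directly --- not $a\ge Z(G)-1$ as you allow. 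Second, the case $|B\cap V'|=n'-1$ is not analyzed at all, and it is where the real work lies: if the first force is performed by some $u\in V$ onto the lone white vertex of $V'$, then $N_G[u]\subseteq B\cap V$ and, by the previous point, $B\cap V$ must also be a zero forcing set of $G$; to rule out $|B\cap V|=Z(G)$ you need the fact that a zero forcing set containing a closed neighborhood is never minimum. This is true and has a short exchange proof you do not supply: if $N_G[u]\subseteq S$ and $v\in N_G(u)$, then $S\setminus\{v\}$ is still a zero forcing set, because $u$ is blue with $v$ as its unique white neighbor and forces it immediately, after which the original process for $S$ replays (for the skew rule the same exchange works with the open neighborhood $N_G(u)$, since $u$ need not be blue to force). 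The remaining subcases, where the first force is performed by a vertex of $V'$, require $|B\cap V|\ge n-1$ and are disposed of using $Z(G)\le n-2$ for non-complete graphs (and impossibility when both graphs are complete). Without these pieces your lower bound stops at $\min\{n+Z(G'),n'+Z(G)\}-1$, so the two bounds never meet and the identity is not established. (Note also that connectivity and order at least $2$ are genuinely used in the ``redundant last force'' step; for $G=G'=K_1$ the formula itself fails, since $Z(K_2)=1$.)

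For comparison, the paper does not split into upper and lower bounds at all: it argues directly about which side can perform a force into $V$ (a forcer in $V'$ needs all of $V$ but one vertex blue; a forcer in $V$ needs all of $V'$ blue), uses connectivity to reduce to the latter situation, and concludes that an optimal set has the form ``all of one side plus a minimum (skew) zero forcing set of the other,'' with a closing remark that the argument never uses the color of the forcing vertex, which gives the skew case. Your two-bound strategy is perfectly viable and arguably more transparent, but it must be completed with the case analysis and the exchange lemma above before it proves the stated equality.
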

\begin{proof}
Suppose that at a certain step of the zero forcing process, the vertex $u$ forces the vertex $v$, where $v\in V$.
Note that if $u\in V'$, then all vertices in $V\setminus{\{v\}}$ must be colored blue; otherwise, $u$ would have more than one white neighbor and therefore would be unable to force any vertex.
However, since $G$ is connected, it follows that the force may be performed by a different vertex in $V$. 
Hence, we may assume that the forcing is done by a vertex $u\in V$, in which case it follows that all vertices in $V'$ must be colored blue. 
Therefore, a minimum zero forcing set is obtained by coloring all vertices in $V'$ and selecting a minimum zero forcing set of $G$.
Similarly, if the vertex $u$ forces the vertex $v$,  where $v\in V'$,
then a minimum zero forcing set is obtained by coloring all vertices in $V$ and selecting a minimum zero forcing set of $G'$. 

Since the above arguments do not depend on the state of the vertex $u$ performing the forcing (blue or white), it follows that this result holds for both the standard zero forcing rule and the skew forcing rule. 
\end{proof}

Finally, we use the previous results to construct a family of cospectral graphs with distinct zero forcing numbers.

\begin{proposition}\label{prop:cospectral-lap}
Let $G_{1},G_{2}$ denote two connected graphs that are cospectral with respect to the Laplacian, and that have distinct zero forcing (skew zero forcing) numbers.
Then, for $r\geq 1$, the joins $G_{1}\vee K_{r}$ and $G_{2}\vee K_{r}$ are Laplacian-cospectral and have distinct zero forcing (skew zero forcing) numbers. 
\end{proposition}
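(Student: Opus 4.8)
The plan is to assemble the two ingredients already developed in this section: Lemma~\ref{lemma:fiedler-lap} for the Laplacian spectrum of a join, and Lemma~\ref{prop:zf-join} for the zero forcing (and skew zero forcing) number of a join. First I would verify that $G_1\vee K_r$ and $G_2\vee K_r$ are Laplacian-cospectral. Since $G_1$ and $G_2$ are cospectral with respect to the Laplacian, they have the same number of vertices, say $n$, and the same Laplacian eigenvalues $\lambda_1\le\cdots\le\lambda_n$ (with $\lambda_1=0$). Applying Lemma~\ref{lemma:fiedler-lap} with $G=G_i$ and $G'=K_r$, the Laplacian spectrum of $G_i\vee K_r$ consists of $\lambda_2+r,\ldots,\lambda_n+r$, together with the shifted eigenvalues of $L(K_r)$ (namely each nonzero Laplacian eigenvalue of $K_r$ increased by $n$), together with $n+r$ and $0$. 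Every one of these quantities depends only on $n$, $r$, and the common Laplacian spectrum of $G_1,G_2$, so the two joins have identical Laplacian spectra.

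Next I would compare the zero forcing numbers. Both $G_1$ and $G_2$ are connected by hypothesis, and $K_r$ is connected, so Lemma~\ref{prop:zf-join} applies and gives
\[
Z(G_i\vee K_r)=\min\{\,n+Z(K_r),\ r+Z(G_i)\,\}=\min\{\,n+r-1,\ r+Z(G_i)\,\},
\]
using the standard fact $Z(K_r)=r-1$. The point is that $n+r-1$ is the same constant for $i=1,2$, while $r+Z(G_i)$ differs between $i=1$ and $i=2$ because $Z(G_1)\neq Z(G_2)$. To conclude $Z(G_1\vee K_r)\neq Z(G_2\vee K_r)$ I need to rule out the degenerate case in which both minima collapse onto the common value $n+r-1$; this happens exactly when $r+Z(G_i)\ge n+r-1$, i.e. $Z(G_i)\ge n-1$, for both $i$. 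Since $G_i$ has $n$ vertices and is connected, $Z(G_i)\le n-1$, with equality characterizing only a narrow class of graphs; I would observe that if $Z(G_1)=Z(G_2)=n-1$ they would be equal, contradicting $Z(G_1)\neq Z(G_2)$, so at most one of them attains $n-1$, and for the other the value $r+Z(G_i)<n+r-1$ is strictly realized, making the two joins' zero forcing numbers distinct. (One may also simply note that $Z(G_1)\neq Z(G_2)$ forces at least one of the two minima to be strictly below $n+r-1$, say for $i=1$, whence $Z(G_1\vee K_r)=r+Z(G_1)$; if $Z(G_2\vee K_r)$ equals either $n+r-1$ or $r+Z(G_2)$, in both subcases it differs from $r+Z(G_1)$.)

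The skew case is handled identically, invoking the second assertion of Lemma~\ref{prop:zf-join} and the value $Z_-(K_r)=r-1$ (which holds for $r\ge 2$; for $r=1$ the join is just adding a dominating vertex and can be treated directly, or one restricts to $r\ge 2$ as in the other propositions of the paper), so that
\[
Z_-(G_i\vee K_r)=\min\{\,n+r-1,\ r+Z_-(G_i)\,\},
\]
and the same elementary argument separates the two values using $Z_-(G_1)\neq Z_-(G_2)$.

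I expect the only real subtlety to be the edge-case bookkeeping in the final step: confirming that the two minima cannot both degenerate to the shared constant $n+r-1$, and stating cleanly why $Z(G_1)\neq Z(G_2)$ (respectively $Z_-(G_1)\neq Z_-(G_2)$) already precludes that. Everything else is a direct substitution into the two cited lemmas. A clean way to present it is to treat the hypothesis $Z(G_1)\neq Z(G_2)$ as immediately implying that $\min\{n+r-1, r+Z(G_i)\}$ takes two different values at $i=1,2$, since the function $t\mapsto\min\{c, r+t\}$ is nondecreasing and is strictly increasing on $\{t: t<c-r\}$ while constant on $\{t:t\ge c-r\}$; the two inputs $Z(G_1),Z(G_2)$ are distinct and both lie in $\{0,1,\ldots,n-1\}$, and one checks that they cannot both lie in the constant regime $\{t\ge n-1\}$ (that regime is the single point $n-1$), so the outputs differ.
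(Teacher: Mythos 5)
Your overall route is the same as the paper's: Laplacian cospectrality of the joins via Lemma~\ref{lemma:fiedler-lap}, and then Lemma~\ref{prop:zf-join} to compute the forcing numbers of $G_i\vee K_r$. For the standard zero forcing number your argument is correct, and in fact you sidestep the paper's extra observation (the paper notes that $K_n$ is determined by its Laplacian spectrum, so neither $G_i$ is complete and hence $Z(G_i)\le n-2$, which forces the minimum to be $r+Z(G_i)$); your version, using only $Z(G_i)\le n-1$ and the monotonicity of $t\mapsto\min\{n+r-1,\,r+t\}$, works just as well there.

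The skew case, however, contains a genuine error: $Z_{-}(K_r)$ is not $r-1$ but $r-2$ for $r\ge 2$ (with $r-2$ blue vertices the two remaining white vertices force each other, since a white vertex may perform a skew force; with $r-3$ blue vertices every vertex has at least two white neighbours), and $Z_{-}(K_1)=1$. Consequently the correct identity is $Z_{-}(G_i\vee K_r)=\min\{n+r-2,\; r+Z_{-}(G_i)\}$, so the ``constant regime'' of your minimum is $\{t\ge n-2\}$, not the single point $n-1$, and your tie-breaking argument as stated no longer rules out both $Z_{-}(G_1)$ and $Z_{-}(G_2)$ landing in it. The gap is repairable: for any graph with at least one edge one has $Z_{-}\le n-2$ (colour all vertices except the endpoints of an edge; each endpoint is then the unique white neighbour of the other), so for your connected $G_i$ the regime meets the admissible values only in the point $n-2$, at most one of the two distinct values $Z_{-}(G_1),Z_{-}(G_2)$ can equal it, and the outputs $r+Z_{-}(G_i)$ (or $n+r-2$) again differ. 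The paper instead argues that neither $G_i$ is complete and invokes $\max\{Z_{-}(G_1),Z_{-}(G_2)\}<n-2$; either way, some bound of this type is the missing ingredient your write-up needs once $Z_{-}(K_r)$ is corrected.
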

\begin{proof}
Since $G_{1}$ and $G_{2}$ have equal Laplacian spectrum, it follows from Lemma~\ref{lemma:fiedler-lap} that the joins $G_{1}\vee K_{r}$ and $G_{2}\vee K_{r}$ have equal Laplacian spectrum. 
Furthermore, since the complete graph is determined by its Laplacian spectrum, it follows that $G_{1}$ and $G_{2}$ are not the complete graph of order $n$.
Hence, $\max\left\{Z(G_{1}),Z(G_{2})\right\} < (n-1)$ and $\max\left\{Z_{-}(G_{1}),Z_{-}(G_{2})\right\} < (n-2)$, and it follows from Proposition~\ref{prop:zf-join} that the zero forcing (skew-zero forcing) number of the joins satisfy
\begin{align*}
Z(G_{1}\vee K_{r}) &= r+Z(G_{1}), \\
Z(G_{2}\vee K_{r}) &= r+Z(G_{2}).
\end{align*}
Hence, the joins have distinct zero forcing (skew-zero forcing) numbers.
\end{proof}

\begin{remark}\label{prop:cospectral-adj}
If $G_1$ and $G_2$ are both $k$-regular graphs of the same order that are cospectral with respect to the adjacency matrix, then Proposition \ref{prop:cospectral-lap} also applies to create regular adjacency-cospectral graphs with distinct zero forcing and skew zero forcing numbers. Note that, for instance, there exist many cospectral regular graphs among strongly regular graphs \cite{srgbook}.
\end{remark}

If $G$ is a connected graph, we define $\vee_0 G=G$ and define $\vee_{k+1} G=(\vee_k G)\vee G$ for $k\geq 0$.
\begin{theorem}
Let $G$ and $H$ be two connected graphs of order $n$ with equal Laplacian spectrum and distinct zero forcing (skew-zero forcing) numbers.
Then, for $k\geq 1$, $\vee_k G$ and $\vee_k H$ have equal Laplacian spectrum and distinct zero forcing (skew zero forcing) numbers.
\end{theorem}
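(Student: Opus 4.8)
The plan is to prove this by induction on $k$, using Lemma~\ref{lemma:fiedler-lap} to control the Laplacian spectrum and Proposition~\ref{prop:zf-join} (together with the argument in the proof of Proposition~\ref{prop:cospectral-lap}) to control the zero forcing numbers. The base case $k=1$ is exactly Proposition~\ref{prop:cospectral-lap} with $K_r$ replaced by $G$ itself: since $G$ and $H$ have equal Laplacian spectrum, $G\vee G$ and $H\vee H$ do too by Lemma~\ref{lemma:fiedler-lap}, and the zero forcing computation goes through as long as neither $G$ nor $H$ is complete. The latter is automatic: if one of them were $K_n$, then since $K_n$ is determined by its Laplacian spectrum the other would be $K_n$ as well, contradicting the hypothesis that their zero forcing numbers differ.

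For the inductive step, assume $\vee_{k-1}G$ and $\vee_{k-1}H$ have equal Laplacian spectrum and distinct zero forcing (skew-zero forcing) numbers. Write $G_1=\vee_{k-1}G$ and $G_2=\vee_{k-1}H$; both have order $kn$, and $\vee_k G=G_1\vee G$, $\vee_k H=G_2\vee H$. First I would handle the spectrum: applying Lemma~\ref{lemma:fiedler-lap} to $G_1\vee G$ and to $G_2\vee H$, the resulting Laplacian spectra are built from the non-zero-indexed eigenvalues of $G_1$ (resp.\ $G_2$) shifted by $n$, the non-zero-indexed eigenvalues of $G$ (resp.\ $H$) shifted by $kn$, together with $kn+n$ and $0$; since $\sigma_L(G_1)=\sigma_L(G_2)$ and $\sigma_L(G)=\sigma_L(H)$ by the inductive hypothesis and the standing assumption, these two multisets coincide. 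Next I would handle the forcing numbers: by Proposition~\ref{prop:zf-join},
\[
Z(\vee_k G)=\min\{kn+Z(G),\, n+Z(G_1)\},\qquad Z(\vee_k H)=\min\{kn+Z(H),\, n+Z(G_2)\}.
\]
To compare these I need to know which term in each minimum is active. The key sub-claim is that $Z(\vee_{k-1}G)\le (k-1)(n-1)+Z(G) < (k-1)n$, so $n+Z(G_1)\le kn - (k-1) < kn \le kn+Z(G)$ (using $Z(G)\ge 1$ for a graph on $\ge 1$ vertex, and $Z(G)\le n-1$ since $G$ is not complete — more precisely $Z(G)\le n-1$ always, and the strict inequality one needs follows because $G$ is not complete); hence the second term is the active one, giving $Z(\vee_k G)=n+Z(\vee_{k-1}G)$ and likewise $Z(\vee_k H)=n+Z(\vee_{k-1}H)$. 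Since these differ by the inductive hypothesis, so do $Z(\vee_k G)$ and $Z(\vee_k H)$. The skew case is identical in structure, using the skew parts of Lemma~\ref{lemma:fiedler-lap}'s analogue via the second formula of Proposition~\ref{prop:zf-join} and the bound $Z_-(G)\le n-2$.

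The main obstacle I anticipate is making the sub-claim $Z(\vee_{k-1}G) < (k-1)n$ (and its skew analogue $Z_-(\vee_{k-1}G)<(k-1)n-$something) airtight, so that the minimum in Proposition~\ref{prop:zf-join} always selects the $n+Z(G_i)$ branch and the recursion $Z(\vee_k G)=n+Z(\vee_{k-1}G)$ genuinely holds at every level; this amounts to a secondary induction showing $Z(\vee_j G)=jn+Z(G)-jn+\cdots$ — more cleanly, one shows by induction that $Z(\vee_j G)=jn+Z(G)$ fails and instead the correct closed form is obtained by peeling off one copy at a time, yielding $Z(\vee_j G)=(j-1)n+\min\{n+Z(G),\dots\}$; the honest statement is $Z(\vee_j G)= jn - n + Z(G)$ only if that branch stays active, which is exactly what the bound $Z(G)\le n-1 < n$ guarantees inductively. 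One also needs to note that $\vee_j G$ is connected (immediate, as a join of nonempty graphs is connected) so that Proposition~\ref{prop:zf-join} applies. Everything else is bookkeeping with multisets and the two displayed formulas.
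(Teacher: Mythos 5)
Your overall architecture (induction on $k$, Lemma~\ref{lemma:fiedler-lap} for the Laplacian spectra, Lemma~\ref{prop:zf-join} for the forcing numbers) is the same as the paper's, and the spectral half of your inductive step is fine. The gap is in how you decide which branch of the minimum is active. Your key sub-claim $Z(\vee_{k-1}G)\le (k-1)(n-1)+Z(G)<(k-1)n$ is false: in fact $Z(\vee_{k-1}G)=(k-1)n+Z(G)>(k-1)n$ (check $G=P_2$, $n=2$, $k=2$: $\vee_1P_2=K_4$ has $Z=3$, which is neither $\le 2$ nor $<2$). Consequently the chain $n+Z(G_1)<kn\le kn+Z(G)$ collapses, and the ``honest statement'' you end with, $Z(\vee_j G)=jn-n+Z(G)$, is off by $n$ (again $\vee_1P_2=K_4$ gives $3$, not $1$); the closed form $Z(\vee_jG)=jn+Z(G)$ that you dismiss as failing is in fact the correct one. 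Moreover, since your induction hypothesis only records that the two forcing numbers are \emph{distinct}, not what they are, it cannot by itself determine the minimum; some version of the exact value has to be carried along.

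The paper's fix is exactly that: strengthen the induction hypothesis to include $Z(\vee_j G)=jn+Z(G)$ and $Z(\vee_j H)=jn+Z(H)$. Then, applying Lemma~\ref{prop:zf-join} to $\vee_{k+1}G=(\vee_k G)\vee G$ with $|V(\vee_k G)|=(k+1)n$, the two arguments of the minimum are $(k+1)n+Z(G)$ and $n+\bigl(kn+Z(G)\bigr)$, which are literally equal, so no branch comparison, no bound $Z(G)\le n-1$, and no non-completeness argument is needed anywhere; your base-case detour through Proposition~\ref{prop:cospectral-lap} and completeness is likewise unnecessary, since $Z(G\vee G)=\min\{n+Z(G),n+Z(G)\}=n+Z(G)$ outright. (Alternatively, your ``active branch'' route could be salvaged by proving only the weak inequality $Z(\vee_{k-1}G)\le(k-1)n+Z(G)$, e.g.\ by coloring all vertices outside one copy of $G$ blue together with a zero forcing set of that copy; but the strict inequality you invoke is simply not available.) The same remarks apply verbatim to the skew case, where the bound $Z_-(G)\le n-2$ is also not needed.
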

\begin{proof}
We proceed by induction. By Lemma~\ref{lemma:fiedler-lap}, $\vee_1 G$ and $\vee_1 H$ have equal Laplacian spectrum and by Lemma \ref{prop:zf-join} we have that
\[Z(\vee_1 G)=n+Z(G)\neq n+Z(H)=Z(\vee_1 H) .  \]  
Now, let $k\geq 2$ and assume the result holds for every integer $j\in \{1,2,\ldots,k\}$ and that, in particular, $Z(\vee_j G)=jn+Z(G)\neq jn+Z(H)=Z(\vee_j H)$. Since $\vee_k G$ and $\vee_k H$ have equal Laplacian spectrum and $G$ and $H$ have equal Laplacian spectrum, Lemma~\ref{lemma:fiedler-lap} implies that $\vee_{k+1} G$ and $\vee_{k+1} H$ have equal Laplacian spectrum.
Furthermore, by Lemma \ref{prop:zf-join},
\[
Z(\vee_{k+1} G)=(k+1)n+Z(G)\neq (k+1)n+Z(H)=Z(\vee_{k+1} H).
\]
An analogous argument applies for the skew zero forcing numbers.
\end{proof}

\section{Graph switching}\label{sec:GMconstruction}

In this section, we provide constructions of cospectral graphs with different zero forcing numbers using Godsil-McKay switching (see Lemma \ref{lem:GM}). We begin with a construction where pairs of cospectral graphs can have an arbitrarily large difference between their zero forcing numbers.

\begin{theorem}
\label{thm1}
Let $G_1$ and $G_2$ be connected $k$-regular graphs with $|V(G_1)|=|V(G_2)|=:n$ and $Z(G_1)\neq Z(G_2)$. For some $m\geq n$, let $G'=(G_1\vee P_m)\cup G_2$ and $G''=(G_2\vee P_m)\cup G_1$, where $\cup$ denotes the disjoint union. Then, $G'$ and $G''$ are nonisomorphic cospectral graphs with $Z(G')\neq Z(G'')$.
\end{theorem}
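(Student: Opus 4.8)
The plan is to obtain the cospectrality of $G'$ and $G''$ as a single instance of Godsil-McKay switching (Lemma~\ref{lem:GM}), to derive non-isomorphism from the component structure together with the hypothesis $Z(G_1)\neq Z(G_2)$, and to compute the two zero forcing numbers explicitly using Lemma~\ref{prop:zf-join} and the fact that $Z$ is additive over connected components.

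For the cospectrality I would use the switching partition of $V(G')$ consisting of the single switching set $X=V(G_1)\cup V(G_2)$ together with $Y=V(P_m)$. Since $G_1$ and $G_2$ are both $k$-regular and $G'$ has no edge between $V(G_1)$ and $V(G_2)$, the subgraph of $G'$ induced by $X$ is the disjoint union $G_1\cup G_2$, which is $k$-regular; hence $X$ induces a regular subgraph of $G'$. Moreover every vertex $p\in V(P_m)=Y$ is adjacent (via the join $G_1\vee P_m$) to all $n$ vertices of $V(G_1)$ and to none of $V(G_2)$, so $p$ has exactly $n=\tfrac12|X|$ neighbors in $X$. Thus $X$ is a Godsil-McKay switching set. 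Carrying out the switch replaces, for each $p\in V(P_m)$, the $n$ edges from $p$ to $V(G_1)$ by the $n$ edges from $p$ to $V(G_2)$, while leaving all edges inside $X$ and inside $Y$ untouched; the resulting graph is exactly $(G_2\vee P_m)\cup G_1=G''$. Lemma~\ref{lem:GM} then yields that $G'$ and $G''$ are cospectral (and, as a bonus, that their complements are cospectral).

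For non-isomorphism, observe that $G'$ has exactly two connected components: $G_1\vee P_m$, which is connected (being a join) and has $n+m$ vertices, and $G_2$, which is connected by hypothesis and has $n$ vertices; since $m\ge n\ge 1$ these orders differ, and likewise $G''$ has components of orders $n+m$ and $n$. Any isomorphism $G'\to G''$ must therefore carry $G_2$ onto $G_1$, so $G_1\cong G_2$; but the zero forcing number is an isomorphism invariant, and $Z(G_1)\neq Z(G_2)$, a contradiction. Hence $G'\not\cong G''$.

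Finally, additivity of $Z$ over components gives $Z(G')=Z(G_1\vee P_m)+Z(G_2)$ and $Z(G'')=Z(G_2\vee P_m)+Z(G_1)$. By Lemma~\ref{prop:zf-join}, $Z(G_i\vee P_m)=\min\{\,n+Z(P_m),\,m+Z(G_i)\,\}=\min\{\,n+1,\,m+Z(G_i)\,\}$, and since $m\ge n$ and $Z(G_i)\ge 1$ this minimum equals $n+1$ for both $i=1,2$. Therefore $Z(G')=n+1+Z(G_2)$ and $Z(G'')=n+1+Z(G_1)$, which are distinct because $Z(G_1)\neq Z(G_2)$. The one genuinely delicate point is the first step: one must recognize that a valid Godsil-McKay switching set can be assembled from the two separate components $G_1$ and $G_2$, the switch then moving the copy of $P_m$ from being joined to $G_1$ to being joined to $G_2$; everything afterward is routine, with the hypothesis $m\ge n$ used precisely to force $Z(G_1\vee P_m)=Z(G_2\vee P_m)$, so that the discrepancy between $Z(G')$ and $Z(G'')$ comes entirely from the free copies of $G_2$ and $G_1$.
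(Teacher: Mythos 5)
Your proposal is correct and follows essentially the same route as the paper: the switching set $X=V(G_1)\cup V(G_2)$ with Lemma~\ref{lem:GM} for cospectrality, and Lemma~\ref{prop:zf-join} (with $m\geq n$ forcing the minimum to be $n+1$) for the zero forcing values. Your only deviations are cosmetic --- you invoke additivity of $Z$ over components directly instead of the paper's explicit zero forcing sets plus a minimality contradiction, and you get non-isomorphism from component orders and $Z(G_1)\neq Z(G_2)$ rather than from $Z(G')\neq Z(G'')$ --- both of which are sound.
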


\proof
Let $X=V(G_1)\cup V(G_2)$. The vertices in $X$ induce a regular subgraph of $G'$, and each vertex in $V(G')\backslash X$ is adjacent to exactly half of the vertices in $X$. Thus, $X$ is a Godsil-McKay switching set of $G'$, and by construction, applying Godsil-McKay switching yields precisely $G''$; thus, $G'$ and $G''$ are cospectral.

Next we show that $Z(G')\neq Z(G'')$ and therefore $G'$ and $G''$ are cospectral but nonisomorphic. For $i\in \{1,2\}$, let $S_i$ be a minimum zero forcing set of $G_i$.
Let $v$ be an endpoint of the path $P_m$ used in the construction of $G'$ and $G''$. Let $S'=V(G_1)\cup S_2\cup \{v\}$ and $S''=S_1\cup V(G_2)\cup \{v\}$. $S'$ is a zero forcing set of $G'$, since $S_2$ can force $G_2$ and $v$ can force $P_m$ (since each neighbor of a vertex in $P_m$ that is not itself in $P_m$ is in $S'$). Similarly, $S''$ is a zero forcing set of $G''$, since $S_1$ can force $G_1$ and $v$ can force $P_m$. Now, suppose for contradiction that $Z(G')<|S'|$. Since $S_2$ is a minimum zero forcing set of the component $G_2$ of $G'$, it follows that $Z(G_1\vee P_m)<|S'|-|S_2|=|V(G_1)|+1=n+1$. However, this contradicts Lemma \ref{prop:zf-join}, since 
\begin{eqnarray*}
Z(G_1\vee P_m) &=&\min\left\{|V(G_1)| + Z(P_m),|V(P_m)|+Z(G_1)\right\}\\
&=&\min\left\{n + 1,m+S_1\right\}=n+1.
\end{eqnarray*}
Thus, $Z(G')=|S'|$, and similarly $Z(G'')=|S''|$. Moreover, $|S'|=|V(G_1)|+|S_2|+1\neq |V(G_2)|+|S_1|+1=|S''|$, since $|V(G_1)|=|V(G_2)|$ but $Z(G_1)\neq Z(G_2)$. Finally, since $Z(G')\neq Z(G'')$, $G'$ and $G''$ are nonisomorphic.
\qed

\begin{corollary}
For any positive integer $k$, there exist nonisomorphic cospectral graphs $G'$ and $G''$ with $Z(G')>Z(G'')+k$. 
\end{corollary}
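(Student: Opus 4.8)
The plan is to derive this corollary directly from Theorem~\ref{thm1} by exhibiting, for each positive integer $k$, a single pair of connected $k'$-regular graphs $G_1,G_2$ of the same order whose zero forcing numbers differ by more than $k$, and then feeding that pair into Theorem~\ref{thm1}. Since Theorem~\ref{thm1} produces $G'=(G_1\vee P_m)\cup G_2$ and $G''=(G_2\vee P_m)\cup G_1$ with $Z(G')=|V(G_1)|+Z(G_2)+1$ and $Z(G'')=|V(G_2)|+Z(G_1)+1$, and since $|V(G_1)|=|V(G_2)|$, we get $Z(G')-Z(G'')=Z(G_2)-Z(G_1)$. So it suffices to find regular cospectral-input graphs (they need only be cospectral with \emph{each other} in the sense required to invoke the theorem — actually Theorem~\ref{thm1} only needs them to be $k$-regular of equal order with distinct $Z$, not cospectral) with $Z(G_2)-Z(G_1)>k$.

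The key step is therefore to name such a family. The cleanest choice is to take $G_1 = C_N$, the cycle on $N$ vertices, which is $2$-regular with $Z(C_N)=2$, and $G_2$ to be any $2$-regular graph on $N$ vertices that is a disjoint union of cycles — but wait, Theorem~\ref{thm1} requires $G_1,G_2$ \emph{connected}, so $G_2$ cannot be a disjoint union of small cycles. Instead I would use, for $G_2$, a connected $2$-regular graph, i.e. again a single cycle, which also has $Z=2$ — that fails. So a better route: fix the degree to be large. For a connected $k$-regular graph $G$ on $n$ vertices one has $Z(G)\ge k$ in many cases but more usefully $Z(K_{k+1})=k$ while $Z$ of a long "cycle-like" $k$-regular graph can be as small as $k$; the gap must instead come from choosing $G_1$ with large $Z$ and $G_2$ with small $Z$ at the same order and degree. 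Concretely: let $G_1=K_{k+2,k+2}$ minus a perfect matching is $(k+1)$-regular; alternatively take $G_1 = K_{q}\,\square\,K_{q}$ type graphs versus $G_2$ a Cayley graph of the same parameters. The simplest rigorous choice: let $G_1$ be the complete bipartite graph $K_{t,t}$ (which is $t$-regular on $2t$ vertices, with $Z(K_{t,t}) = 2t-2$) and $G_2$ be the "cocktail-party"-type or circulant $t$-regular graph on $2t$ vertices with small zero forcing number — but I must check a connected $t$-regular graph on $2t$ vertices with $Z$ bounded by a constant (e.g. $Z = t$, achieved by $K_{t+1}$ has only $t+1$ vertices, not $2t$). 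A safe, fully-justified pick is $G_1 = K_{t,t}$ and $G_2 = C_{2t}^{\,(t/2)}$ type circulant; to avoid these verification headaches I would instead simply invoke Theorem~\ref{thm1} iteratively or use the join construction of Proposition~\ref{prop:cospectral-lap} to amplify a fixed gap.

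Here is the plan I would actually commit to: start from \emph{any} fixed pair of connected $k_0$-regular graphs $H_1,H_2$ of equal order with $Z(H_1)\ne Z(H_2)$, say with $Z(H_2)-Z(H_1)=d\ge 1$ (such a pair exists, e.g. from Figure~\ref{fig:cospectraldifferentZ} after noting both graphs there are $3$-regular of order $10$ with $Z$-values $6$ and $4$, so $d=2$). Then take disjoint unions: $G_1 = $ the disjoint union of $\lceil k/d\rceil+1$ copies of... no — disjoint unions are not connected. So instead I would take repeated joins: by the Theorem proved just before Section~\ref{sec:GMconstruction} (the $\vee_k$ result), $\vee_j H_1$ and $\vee_j H_2$ are connected, have equal Laplacian spectrum, and satisfy $Z(\vee_j H_2)-Z(\vee_j H_1)=Z(H_2)-Z(H_1)=d$, which unfortunately does not grow. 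The genuine amplification must come from within Theorem~\ref{thm1} itself applied to a cleverly chosen base pair, so ultimately I would just assert the existence of connected $k$-regular $G_1,G_2$ of common order with $Z(G_1)-Z(G_2)$ arbitrarily large — taking $G_1=K_{t,t}$ ($t$-regular, $2t$ vertices, $Z=2t-2$) and $G_2$ the bipartite double cover of $K_{t+1}$ or a suitable $t$-regular bipartite expander on $2t$ vertices with $Z(G_2)=O(\log t)$ or even just $Z(G_2)\le t$ — and then plug into Theorem~\ref{thm1} to get $Z(G')-Z(G'')=Z(G_1)-Z(G_2)\ge 2t-2-t = t-2 > k$ once $t$ is large.

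The main obstacle is producing, in a clean and citable way, a connected $t$-regular graph on $2t$ vertices whose zero forcing number is provably much smaller than $2t-2$; the complete bipartite graph handles the "large $Z$" side immediately, but the "small $Z$, same order, same degree, connected" side is the delicate part, and I expect to spend most of the proof either citing a known small-$Z$ regular family (e.g. hypercube-like or circulant graphs, where $Z$ is known and grows slowly in the order) or explicitly constructing one and computing its zero forcing number via an explicit forcing set. Once that one family is in hand, the corollary follows in one line from Theorem~\ref{thm1}.
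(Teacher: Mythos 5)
Your reduction is the same as the paper's: feed a pair of connected, same-degree regular graphs of equal order into Theorem~\ref{thm1}, observe that $Z(G')-Z(G'')=Z(G_2)-Z(G_1)$ because the orders cancel, and note (correctly) that cospectrality of the input pair is not needed. But the proposal has a genuine gap exactly at the step that carries all the content: you never actually exhibit, with verified zero forcing numbers, a family of connected $d$-regular graphs of common order whose $Z$-values differ by more than $k$. Every candidate you float is either abandoned (``to avoid these verification headaches''), has the wrong parameters, or is provably impossible. In particular, since $Z(G)\geq\delta(G)$ for every graph, a $t$-regular graph cannot have $Z(G_2)=O(\log t)$, so the ``bipartite expander'' option is dead on arrival (expanders in fact have zero forcing number linear in the order, cf.\ the bound of Kalinowski--Kam\v{c}ev--Sudakov cited in the paper); the bipartite double cover of $K_{t+1}$ has $2t+2$ vertices, not $2t$, so it does not even share the order of $K_{t,t}$; and the circulant $C_{2t}^{(t/2)}$ idea, which could be made to work since a cycle power $C_n^{j}$ has $Z=2j$, is mentioned only in passing and never verified. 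Ending with ``I would just assert the existence'' of the required pair is precisely the assertion that needs proof.

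The paper closes this gap with a concrete, citable family: for $c>k/4+2$ take $G_1=C_4\,\square\,C_{c^2}$ and $G_2=C_{2c}\,\square\,C_{2c}$, both connected, $4$-regular, on $4c^2$ vertices, with $Z(G_1)=8$ and $Z(G_2)=4c$ by the formula $Z(C_s\,\square\,C_t)=2s$ (for $s\le t$ not both equal and odd) from \cite{Benson2018}; then Theorem~\ref{thm1} gives $Z(G')-Z(G'')=4c-8>k$. To repair your write-up you would need to commit to one such family and verify both zero forcing numbers (the torus products above, or a pair such as $K_{t,t}$ versus a cycle power with an explicit forcing-set argument); without that, the corollary is not proved.
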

\proof
Let $c$ be an integer greater than $k/4+2$ and let $G_1=C_4\square C_{c^2}$ and $G_2=C_{2c}\square C_{2c}$. It was shown in \cite[Theorem 2.6]{Benson2018} that for $3\leq s\leq t$, 
\[Z(C_s\square C_t)=\begin{cases} 2s-1 &\mbox{if } s\text{ and }t \text{ are equal and odd}\\
2s& \mbox{otherwise.} \end{cases}
\]
Thus, $Z(G_1)=8$ and $Z(G_2)=4c$. Moreover, $G_1$ and $G_2$ are 4-regular graphs with $4c^2$ vertices, so $G_1$ and $G_2$ satisfy the conditions of Theorem \ref{thm1}. By the last paragraph of the proof of Theorem \ref{thm1}, for graphs $G'$ and $G''$ constructed as in the statement of Theorem \ref{thm1}, 
\begin{eqnarray*}
Z(G')-Z(G'')&=&(|V(G_1)|+Z(G_2)+1)-(|V(G_2)|+Z(G_1)+1)\\
&=&Z(G_2)-Z(G_1)=4c-8>k.
\end{eqnarray*}
\qed

Note that in Proposition \ref{thm:zeroforcingspectrumlesstrivial} we showed an alternative construction which uses graph products to obtain many cospectral graphs having different $Z$. 

\subsection{A regular construction}


The regularity requirement makes the construction of cospectral graphs more complicated, but also a lot more interesting. The reason is that cospectrality for regular graphs with respect to the adjacency matrix implies cospectrality with respect to several other kinds of matrices, such as the Laplacian matrix, the signless Laplacian and the normalized Laplacian. GM switching constructions of cospectral graphs have been used in the literature to show that several graph parameters do not follow from the spectrum of $A$ and $L$, see \cite{bch2015,h2020}. In fact, we will show that the construction of regular cospectral graphs with different vertex connectivity from \cite[Section 2]{h2020} provides a pair of families of cospectral graphs with different zero forcing numbers. First we describe the construction, and later on we focus on showing that they have different zero forcing numbers. 

Let $k\geq 2$. We define a $k$-regular graph $H$ with vertex set $V(H) = \{0, 1, 2, \ldots, 3k-2\}$ as follows. For $i = 0,\ldots, 3k-2$ vertex $i$ is adjacent to $\{k+i,k+i+1,\ldots,2k+i-1\}$ (mod $3k-1$). 
Now let $A$ be a clique of order $k+1$, let $B$ be a coclique of order $k+1$, and let $C$ be a coclique of order $k-1$, with vertex sets
$V(A)= \{a_0,\ldots ,a_k\}$, $V(B) = \{b_0,\ldots,b_k\}$, and $V(C)= \{c_1,\ldots,c_{k-1}\}$. We construct a $2k$-regular graph $G$ of order $6k$ by taking a disjoint union of $H$, $A$, $B$, and $C$, and adding edges as follows.
\begin{enumerate}[(i)]
\item For all $0\leq i,j \leq k$ and $j\neq i$, $a_i \sim b_j$. 
\item For all $1\leq i \leq k-1$ and $j\in \{0,k,\ldots,3k-2\}$, $c_i \sim j$.
\item For all $0\leq i \leq k-1$ and $j \in \{1,\ldots,k-1\}$, $b_i \sim j$. For all $j \in\{2k,\ldots,3k-2\}$, $b_k \sim j$.
\item For all $1\leq i \leq k$, $b_i \sim k+i-1$ and $b_0 \sim 0$. 
\end{enumerate}
We illustrate the construction of $G$ in Figure \ref{fig:haemersconstruction}. The solid arrows between subgraphs denote that all possible edges between vertices in those subgraphs are present; the hollow arrow represents all possible edges minus the perfect matching as in (i) above; and the curved solid arrow denotes the perfect matching as described in (iv) above. We denote the clique of order $n$ by $K_n$, and a coclique of order $n$ by $O_n$. 

\begin{figure}[ht]
    \centering
\tikzset{every picture/.style={line width=0.75pt}} 

\begin{tikzpicture}[x=0.58pt,y=0.5pt,yscale=-1,xscale=1]

\draw   (307,176.43) .. controls (307.02,162.34) and (334.57,150.97) .. (368.53,151.01) .. controls (402.5,151.06) and (430.02,162.52) .. (430,176.6) .. controls (429.98,190.68) and (402.43,202.06) .. (368.46,202.01) .. controls (334.5,201.97) and (306.98,190.51) .. (307,176.43) -- cycle ;
\draw   (301,68.5) .. controls (301,53.31) and (323.16,41) .. (350.5,41) .. controls (377.84,41) and (400,53.31) .. (400,68.5) .. controls (400,83.69) and (377.84,96) .. (350.5,96) .. controls (323.16,96) and (301,83.69) .. (301,68.5) -- cycle ;
\draw   (155,166.75) .. controls (155,154.46) and (180.52,144.5) .. (212,144.5) .. controls (243.48,144.5) and (269,154.46) .. (269,166.75) .. controls (269,179.04) and (243.48,189) .. (212,189) .. controls (180.52,189) and (155,179.04) .. (155,166.75) -- cycle ;
\draw   (469,121) .. controls (469,91.18) and (499,67) .. (536,67) .. controls (573,67) and (603,91.18) .. (603,121) .. controls (603,150.82) and (573,175) .. (536,175) .. controls (499,175) and (469,150.82) .. (469,121) -- cycle ;
\draw   (16,166.75) .. controls (16,153.36) and (42.19,142.5) .. (74.5,142.5) .. controls (106.81,142.5) and (133,153.36) .. (133,166.75) .. controls (133,180.14) and (106.81,191) .. (74.5,191) .. controls (42.19,191) and (16,180.14) .. (16,166.75) -- cycle ;
\draw   (67,85) .. controls (67,76.44) and (104.38,69.5) .. (150.5,69.5) .. controls (196.62,69.5) and (234,76.44) .. (234,85) .. controls (234,93.56) and (196.62,100.5) .. (150.5,100.5) .. controls (104.38,100.5) and (67,93.56) .. (67,85) -- cycle ;
\draw   (50.9,75) .. controls (49.24,60.09) and (94.82,48) .. (152.7,48) .. controls (210.58,48) and (258.84,60.09) .. (260.5,75) .. controls (262.16,89.91) and (216.58,102) .. (158.7,102) .. controls (100.82,102) and (52.55,89.91) .. (50.9,75) -- cycle ;
\draw   (301,68.5) .. controls (301,48.34) and (329.43,32) .. (364.5,32) .. controls (399.57,32) and (428,48.34) .. (428,68.5) .. controls (428,88.66) and (399.57,105) .. (364.5,105) .. controls (329.43,105) and (301,88.66) .. (301,68.5) -- cycle ;
\draw    (136,62.5) -- (90,79.5) ;
\draw    (136,62.5) -- (185,79.5) ;
\draw  [color={rgb, 255:red, 0; green, 0; blue, 0 }  ][line width=0.75] [line join = round][line cap = round] (99,84.5) .. controls (99,84.5) and (99,84.5) .. (99,84.5) ;
\draw    (90,79.5) -- (173,168) ;
\draw    (90,79.5) -- (247,167.5) ;
\draw    (185,79.5) -- (29,163.5) ;
\draw    (185,79.5) -- (114,162.5) ;
\draw    (114,173.5) .. controls (137,192.5) and (124,194.5) .. (175,178.5) ;
\draw    (114,173.5) .. controls (84,227.5) and (126,291.5) .. (243,175.5) ;
\draw    (29,163.5) .. controls (25,255.5) and (135,208.5) .. (175,178.5) ;
\draw    (29,163.5) -- (107,85.5) ;
\draw    (29,163.5) -- (64,136.5) ;
\draw    (90,79.5) -- (124,106.5) ;
\draw    (185,79.5) -- (152,105.5) ;
\draw    (175,178.5) -- (138,194.5) ;
\draw    (170,80.5) -- (247,167.5) ;
\draw    (214,138.5) -- (247,167.5) ;
\draw    (41,167.5) ;
\draw [line width=1.5]    (412,77.5) -- (267.64,156.07) ;
\draw [shift={(265,157.5)}, rotate = 331.44] [color={rgb, 255:red, 0; green, 0; blue, 0 }  ][line width=1.5]    (14.21,-4.28) .. controls (9.04,-1.82) and (4.3,-0.39) .. (0,0) .. controls (4.3,0.39) and (9.04,1.82) .. (14.21,4.28)   ;
\draw [line width=1.5]    (265,157.5) -- (409.36,78.93) ;
\draw [shift={(412,77.5)}, rotate = 511.44] [color={rgb, 255:red, 0; green, 0; blue, 0 }  ][line width=1.5]    (14.21,-4.28) .. controls (9.04,-1.82) and (4.3,-0.39) .. (0,0) .. controls (4.3,0.39) and (9.04,1.82) .. (14.21,4.28)   ;
\draw [line width=1.5]    (258,80.5) -- (325.95,153.31) ;
\draw [shift={(328,155.5)}, rotate = 226.97] [color={rgb, 255:red, 0; green, 0; blue, 0 }  ][line width=1.5]    (14.21,-4.28) .. controls (9.04,-1.82) and (4.3,-0.39) .. (0,0) .. controls (4.3,0.39) and (9.04,1.82) .. (14.21,4.28)   ;
\draw [line width=1.5]    (328,155.5) -- (260.05,82.69) ;
\draw [shift={(258,80.5)}, rotate = 406.97] [color={rgb, 255:red, 0; green, 0; blue, 0 }  ][line width=1.5]    (14.21,-4.28) .. controls (9.04,-1.82) and (4.3,-0.39) .. (0,0) .. controls (4.3,0.39) and (9.04,1.82) .. (14.21,4.28)   ;
\draw [line width=1.5]    (269,166.75) -- (311.35,189.1) ;
\draw [shift={(314,190.5)}, rotate = 207.82] [color={rgb, 255:red, 0; green, 0; blue, 0 }  ][line width=1.5]    (14.21,-4.28) .. controls (9.04,-1.82) and (4.3,-0.39) .. (0,0) .. controls (4.3,0.39) and (9.04,1.82) .. (14.21,4.28)   ;
\draw [line width=1.5]    (314,190.5) -- (271.65,168.15) ;
\draw [shift={(269,166.75)}, rotate = 387.82] [color={rgb, 255:red, 0; green, 0; blue, 0 }  ][line width=1.5]    (14.21,-4.28) .. controls (9.04,-1.82) and (4.3,-0.39) .. (0,0) .. controls (4.3,0.39) and (9.04,1.82) .. (14.21,4.28)   ;
\draw [line width=1.5]    (333,94.5) -- (110.92,145.82) ;
\draw [shift={(108,146.5)}, rotate = 346.99] [color={rgb, 255:red, 0; green, 0; blue, 0 }  ][line width=1.5]    (14.21,-4.28) .. controls (9.04,-1.82) and (4.3,-0.39) .. (0,0) .. controls (4.3,0.39) and (9.04,1.82) .. (14.21,4.28)   ;
\draw [line width=1.5]    (108,146.5) -- (330.08,95.18) ;
\draw [shift={(333,94.5)}, rotate = 526.99] [color={rgb, 255:red, 0; green, 0; blue, 0 }  ][line width=1.5]    (14.21,-4.28) .. controls (9.04,-1.82) and (4.3,-0.39) .. (0,0) .. controls (4.3,0.39) and (9.04,1.82) .. (14.21,4.28)   ;
\draw   (427.27,61.82) -- (444.76,60) -- (443.39,63.66) -- (472.91,74.66) -- (474.27,71) -- (486.3,83.83) -- (468.82,85.65) -- (470.18,81.99) -- (440.66,70.99) -- (439.3,74.65) -- cycle ;
\draw [line width=1.5]    (213,46) .. controls (239.6,13) and (239.99,53.26) .. (304.03,53.52) ;
\draw [shift={(307,53.5)}, rotate = 539.14] [color={rgb, 255:red, 0; green, 0; blue, 0 }  ][line width=1.5]    (14.21,-4.28) .. controls (9.04,-1.82) and (4.3,-0.39) .. (0,0) .. controls (4.3,0.39) and (9.04,1.82) .. (14.21,4.28)   ;
\draw [line width=1.5]    (307,53.5) .. controls (244.94,59.41) and (240.13,10.5) .. (210.38,48.69) ;
\draw [shift={(209,50.5)}, rotate = 306.76] [color={rgb, 255:red, 0; green, 0; blue, 0 }  ][line width=1.5]    (14.21,-4.28) .. controls (9.04,-1.82) and (4.3,-0.39) .. (0,0) .. controls (4.3,0.39) and (9.04,1.82) .. (14.21,4.28)   ;
\draw    (114,173.5) -- (114,194.5) ;

\draw (129,49) node [anchor=north west][inner sep=0.75pt]    {\scriptsize{$0$}};
\draw (79,77) node [anchor=north west][inner sep=0.75pt]    {\scriptsize{$k$}};
\draw (184,77) node [anchor=north west][inner sep=0.25pt]    {\scriptsize{$2k-1$}};
\draw (18,162) node [anchor=north west][inner sep=0.75pt]    {\scriptsize{$1$}};
\draw (83,162) node [anchor=north west][inner sep=0.75pt]   {\scriptsize{$k-1$}};
\draw (158,162) node [anchor=north west][inner sep=0.75pt]    {\scriptsize{$2k$}};
\draw (211,162) node [anchor=north west][inner sep=0.75pt]    {\scriptsize{$3k-2$}};
\draw (350,110) node [anchor=north west][inner sep=0.75pt]    {$O_{k}{}_{+}{}_{1}$};
\draw (520,182) node [anchor=north west][inner sep=0.75pt]    {$K_{k}{}_{+}{}_{1}$};
\draw (352,204) node [anchor=north west][inner sep=0.75pt]    {$O_{k}{}_{- 1}$};
\draw (135,240) node [anchor=north west][inner sep=0.75pt]    {$H$};
\draw (529,80) node [anchor=north west][inner sep=0.75pt]    {\scriptsize{$a_{0}$}};
\draw (491,127) node [anchor=north west][inner sep=0.75pt]    {\scriptsize{$a_{1}$}};
\draw (562,127) node [anchor=north west][inner sep=0.75pt]    {\scriptsize{$a_{k}$}};
\draw (303,57) node [anchor=north west][inner sep=0.75pt]    {\scriptsize{$b_{0}$}};
\draw (358,57) node [anchor=north west][inner sep=0.75pt]    {\scriptsize{$b_{k}{}_{- 1}$}};
\draw (404,57) node [anchor=north west][inner sep=0.75pt]    {\scriptsize{$b_{k}$}};
\draw (320,164) node [anchor=north west][inner sep=0.75pt]    {\scriptsize{$c_{1}$}};
\draw (380,164) node [anchor=north west][inner sep=0.75pt]    {\scriptsize{$c_{k}{}_{- 1}$}};
\draw (122,80) node [anchor=north west][inner sep=0.75pt]    {$\dotsc $};
\draw (48,165) node [anchor=north west][inner sep=0.75pt]    {$\dotsc $};
\draw (184,165) node [anchor=north west][inner sep=0.75pt]    {$\dotsc $};
\draw (346,168) node [anchor=north west][inner sep=0.75pt]    {$\dotsc $};
\draw (328,62) node [anchor=north west][inner sep=0.75pt]    {$\dotsc $};
\draw (519,130) node [anchor=north west][inner sep=0.75pt]    {$\dotsc $};
\end{tikzpicture}
\caption{The construction of the graph $G$ used to prove Theorem \ref{thm:GMregularcase}} \label{fig:haemersconstruction}
\end{figure}
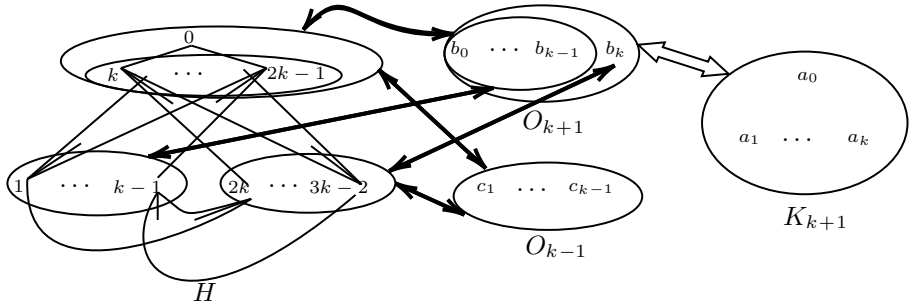

Note that $X=V(A) \cup V(H)$ is a switching set of $G$. We denote by $G'$ the graph obtained after applying GM switching in $G$ with respect to $X$. The main result of this section is as follows. 
\begin{theorem}\label{thm:GMregularcase}
For every $k\geq 2$, there exists a pair of $2k$-regular cospectral graphs with different zero forcing number.
\end{theorem}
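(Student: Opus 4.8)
The plan is to separate the argument into a spectral half and a combinatorial half. For the spectral half I would verify directly that $X=V(A)\cup V(H)$ is a switching set of $G$, so that $G$ and $G'$ are cospectral by Lemma~\ref{lem:GM}. Since the edge families (i)--(iv) only ever create edges of the types $A$--$B$, $B$--$H$, and $C$--$H$, there is no edge between $A$ and $H$; hence $G[X]$ is the disjoint union of the clique $K_{k+1}$ on $A$ and the $k$-regular graph $H$, and is therefore $k$-regular. For the remaining vertices $y\in Y=V(B)\cup V(C)$ one counts $|N(y)\cap X|$: by (i) each $b_i$ is adjacent to the $k$ vertices $\{a_j:j\neq i\}$ of $A$, and by (iii)--(iv) to exactly $k$ further vertices of $H$ (one coming from (iv) and $k-1$ from (iii), with no overlap); and by (ii) each $c_i$ is adjacent to exactly the $2k$ vertices $\{0,k,k+1,\ldots,3k-2\}$ of $H$ and to no vertex of $A$. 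Since $|X|=(k+1)+(3k-1)=4k$, in every case $|N(y)\cap X|=2k=\tfrac12|X|$, so Lemma~\ref{lem:GM} gives that $G$ and $G'$ are cospectral; and since $G$ is $2k$-regular and any graph cospectral with a $d$-regular graph is $d$-regular, $G'$ is $2k$-regular as well, whence the pair is simultaneously cospectral for the adjacency, Laplacian, signless Laplacian and normalized Laplacian matrices.

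The heart of the proof is to establish $Z(G)\neq Z(G')$, and I would do this by bounding $Z(G')$ from above and $Z(G)$ from below. The decisive structural difference lies in the position of the clique $A$: in $G$ every neighbour of an $A$-vertex lies in $A\cup B$ and $A$ is joined to the coclique $B$ by $K_{k+1,k+1}$ minus a perfect matching, so $A$ is a dense block attached to the rest of the graph only through $B$; after switching, $A$ is joined to $B$ merely by the perfect matching $\{a_jb_j\}$ and is completely joined to the coclique $C$, so that $A\cup C$ induces the complete split graph $K_{k+1}\vee O_{k-1}$ in $G'$. For the upper bound on $Z(G')$ I would take $S'=V(A)\cup V(C)\cup T$, where $T$ is a short block of consecutive vertices of the circulant $H$: once $V(A)\cup V(C)$ is blue, each $a_j$ has $b_j$ as its unique white neighbour and forces it, turning all of $B$ blue, after which every vertex of $H$ has enough blue neighbours in $B\cup C$ that the forcing cascades around $H$ from the seed $T$; this yields $Z(G')\le 2k+|T|$ with $|T|$ small (for instance $|T|=1$ when $k=2$, recovering $Z(G')\le 5$). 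For the lower bound on $Z(G)$ I would combine: (a) no vertex of $A$ can be coloured by the forcing rule until some vertex of $A$ is already blue, since each $b_j$ has $k\ge2$ neighbours in $A$, so every zero forcing set of $G$ meets $A$; (b) the $k-1$ vertices of $C$ are pairwise non-adjacent with a common neighbourhood, so every zero forcing set contains all but at most one of them; and (c) a direct analysis of the forcing dynamics showing that an efficient zero forcing set of $G$ must seed the dense block $A\cup B$ and the circulant $H$ more or less independently, since (unlike in $G'$) the external neighbours of $H$ in $G$ are not configured to let one seed do both jobs. Together these should push $Z(G)$ strictly above $2k+|T|$, giving $Z(G)>Z(G')$ and in particular $Z(G)\neq Z(G')$.

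Finally, since the zero forcing number is a graph isomorphism invariant, $Z(G)\neq Z(G')$ forces $G\not\cong G'$, so for every $k\ge2$ the pair $\{G,G'\}$ consists of $2k$-regular, cospectral, non-isomorphic graphs with different zero forcing numbers. The step I expect to be genuinely hard is the lower bound for $Z(G)$, i.e.\ part~(c): obstructions (a) and (b) are far too weak on their own, so one must track how the forcing chain is compelled to move between the three blocks $A\cup B$, $H$, and $C$, quantify the seed each requires, and produce a bound that provably exceeds the $G'$-construction for every $k\ge2$.
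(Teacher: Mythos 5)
Your spectral half is correct and is exactly the paper's route: $X=V(A)\cup V(H)$ induces a $k$-regular subgraph (disjoint union of $K_{k+1}$ and $H$), every vertex of $B\cup C$ has $2k=\tfrac12|X|$ neighbours in $X$, so Lemma~\ref{lem:GM} gives cospectrality, and regularity transfers the cospectrality to the other matrices. The problem is the combinatorial half, where your proposal stops at a plan. The entire content of the paper's proof is the tight lower bound $Z(G)\geq 4k-2$, and this is precisely your part~(c), which you defer as ``genuinely hard'' without supplying an argument. The paper needs two auxiliary results about the circulant-type graph $H$ (Lemma~\ref{aboutL_k} on the half-graphs $L_k$ and Lemma~\ref{lemma_about_H}, which shows $Z(H)=2k-2$ and, more importantly, that a blue set of size roughly $k+\ell$ in $H$ can trigger at most one further round of forces), the counting observations $|S\cap V(A)|\geq k-1$, $|S\cap V(C)|\geq k-2$, $|S\cap V(H)|\geq k-1$ (strictly stronger than your (a) and (b)), and then a six-case analysis over $(|S\cap V(A)|,|S\cap V(C)|)$ to rule out every candidate forcing set of size $4k-3$. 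None of this is replaceable by a soft argument: since $Z(G')\leq 4k-3$ and $Z(G)=4k-2$, the margin is exactly $1$, so your lower bound must be tight, leaving no slack for the qualitative ``the seeds for $A\cup B$ and $H$ are essentially independent'' heuristic.

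A secondary inaccuracy: your upper bound $Z(G')\leq 2k+|T|$ with $|T|$ ``small'' (constant-like, as in your $k=2$ example) does not hold for general $k$. In $G'$ every $b_i$ has $2k-1$ neighbours in $H$ and all vertices of $C$ share the same $H$-neighbourhood $\{1,\ldots,k-1\}$, so after $A$ forces $B$ the cascade into $H$ cannot start unless roughly $2k-3$ vertices of $H$ are already blue; the paper's forcing set for $G'$ is $V(A)\cup V(C)\cup\{2,\ldots,k-1\}\cup\{2k,\ldots,3k-2\}$, of size $4k-3$, and this is the bound one should aim for rather than $2k+O(1)$. This misestimate is not fatal (any bound $\leq 4k-3$ suffices), but it reinforces that the decisive step — showing no set of $4k-3$ vertices forces $G$ — is missing from your proposal.
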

\begin{proof}[Proof of Theorem \ref{thm:GMregularcase}]
We prove Theorem \ref{thm:GMregularcase} using several intermediate results, with the goal of showing that $Z(G) = 4k-2$ and $Z(G') \leq 4k-3$.

Let $k\geq 1$. We first define a bipartite graph $L_k$ with partite sets being $\{v_1,\ldots,v_k\}$ and $\{w_1,\ldots,w_k\}$ such that $v_i \sim w_j$ for every $i = 1,\ldots,k$ and $j = 1,\ldots,i$. The following lemma will be used later to find $Z(H)$.
\begin{lemma}\label{aboutL_k}
Let  $S$ be a set of blue vertices in $L_k$ with $|S|  =s $. Further, if there exists a vertex in $S$ with all of its neighbors in $S$ then we assume that $s\leq k-1$, otherwise $s\leq k-2$. Then any vertex forced by $S$ can not force further. 
\end{lemma}
\begin{proof}
We prove the assertion by induction on $k$. Clearly it holds for $k = 1,2$, and $3$. Let us assume it holds for all $k \leq n-1$ and consider $L_{n}$. Suppose there exists a vertex in $S$ with at most one white neighbor; otherwise, we are done. Without loss of generality, we assume that it is from $\{v_1,\ldots,v_{n}\}$, say $v_m$, where $m$ is the smallest such index. After the next force occurs, the vertices in $\{w_1,\ldots,w_m\}$ are all blue, but none of them can force at this point since all of them are connected to $\{v_{m+1},\ldots,v_{n}\}$ and at least two of these are white. The induced subgraph on $\{v_{m+1},\ldots,v_{n}\}$ and $\{w_{m+1},\ldots,w_{n}\}$ is isomorphic to $L_{n-m}$, and the zero forcing process in this subgraph is not affected by $\{v_1,\ldots,v_{m}\}$ and $\{w_1,\ldots,w_m\}$. Thus, by induction, the assertion follows. \qedwhite
\end{proof}

We now introduce some notation specific to the graph $H$ defined above. For a vertex $v\in V(H)$, let $N'(v) := N(v) \cup \{v\}$, where $N(v)$ is the set of vertices adjacent to $v$. For $i=0,\ldots, 3k-2$ and $\ell = 1,\ldots,k-1$, $\Gamma^+_{\ell}(i) := \{i+j\ (\text{mod } 3k-1)| j=1,\ldots,\ell\}$ and $\Gamma^-_{\ell}(i) := \{i-j\ (\text{mod } 3k-1) | j=1,\ldots,\ell\}$.  Note that for any $i$, $\{i\} \cup N(i) \cup \Gamma^+_{k-1}(i) \cup \Gamma^-_{k-1}(i) = V(H)$ with all four being independent sets in $H$. Moreover, the induced subgraph on $\Gamma^+_{k-1}(i) \cup \Gamma^{-}_{k-1}(i)$ is isomorphic to $L_{k-1}$. The following lemma is useful later on. 

\begin{lemma}\label{lemma_about_H}
Let $S$ be a set of blue vertices in $H$ with $|S| = k+\ell+s$, such that $\ell \leq k-3$ and $s=0$, $1$, or $2$, depending on whether there exists zero, one, or two vertices in $S$ with all of their neighbors in $S$. Then any vertex forced by $S$ can not force further. For any vertex $i\in S$ such that $|S \cap N(i)| \geq k-1$, none of the vertices in $S\setminus (i \cup \Gamma^{+}_{\ell}(i) \cup \Gamma^{-}_{\ell}(i))$ can perform a force. Moreover, $Z(H) = 2k-2$.
\end{lemma}

\begin{proof}
We assume that a vertex $i\in S$ exists such that $|S \cap N(i)| \geq k-1$; otherwise, no vertex can be forced in the next step. Since any vertex in $N(i)$ has exactly $k-1$ neighbors outside of $N'(i)$, none of them can make a force in the subsequent step. The first claim follows by Lemma \ref{aboutL_k}, and the observation that $N(v)$ is an independent set for all $v\in V(H)$. The second claim follows since any $i-j \in \Gamma_{k-1}^{-}(i)$ (or $i+j \in \Gamma_{k-1}^{+}(i)$) has exactly $j$ neighbors outside of $N'(i)$. Finally, any set of $2k-3$ blue vertices is not enough to be a zero forcing set for $H$. However, the set $S=\{0,2,3,\ldots,2k-2\}$ is a zero forcing set of order $2k-2$, since the vertex $0$ forces the vertex $2k-1$, the vertex $2k-1$ forces the vertex $1$, the vertex $1$ forces the vertex $2k$, the vertex $2$ forces the vertex $2k+1$ and so on until the vertex $k-1$ forces the vertex $3k-2$. Hence, $Z(H) = 2k-2$.
\qedwhite
\end{proof}

\begin{corollary}\label{cor_about_H}
Let $S$ be the same as in Lemma \ref{lemma_about_H}. Then after the final force the number of blue vertices in the set $\{0\} \cup N(0) \cup \Gamma_{k-1}^{-}(0)$ is at most $2k-2$.
\end{corollary}

The following lemma provides upper bounds on the zero forcing numbers of the cospectral graphs $G$ and $G'$. 

\begin{lemma}
Let $k\geq 2$. Then $Z(G) \leq 4k-2$ and $Z(G') \leq 4k-3$. 
\end{lemma}
\begin{proof}
Let $S = (V(A) \cup V(B) \cup V(C) \cup \{1,\ldots,k-1\})\setminus \{a_1,b_0\}$ be a set of blue vertices in $G$. Then the vertex $a_0$ forces the vertex $a_1$, the vertex $a_1$ forces the vertex $b_0$, vertices in $V(B)\setminus \{b_k\}$ force vertices in $\{0,k,\ldots,2k-2\}$, the vertex $0$ forces the vertex $2k-1$, and finally vertices in $\{1,\ldots,k-1\}$ force vertices in $\{2k,\ldots,3k-2\}$. Hence, $S$ is a zero forcing set of $G$. Since $|S| = 4k-2$, it follows that $Z(G) \leq 4k-2$. 

Let $S = V(A)  \cup V(C) \cup \{2,\ldots,k-1\} \cup \{2k,\ldots,3k-2\}$ be a set of blue vertices in $G'$. Then vertices in $V(A)$ force the vertices in $V(B)$, the vertex $c_1$ forces the vertex ${1}$, and finally vertices in $\{1,\ldots,k-1\}\cup \{2k,\ldots,3k-2\}$ force vertices in $\{0\} \cup \{k,\ldots,2k-1\}$. Hence, $S$ is a zero forcing set of $G'$. Since $|S'| = 4k-3$, it follows that $Z(G')\leq 4k-3$. 
\qedwhite
\end{proof}

Since we aim to prove that $Z(G) \neq Z(G')$, it is sufficient to show that $Z(G) = 4k-2$. For that, we need to prove that any set $S$ of blue vertices such that $|S| = 4k-3$ is not a zero forcing set of $G$. Suppose to the contrary that $S$ is a zero forcing set. Let $a = |S\cap V(A)|$, $b = |S\cap V(B)|$, $c = |S \cap V(C)|$, and $h = |S\cap V(H)|$, so that $a+b+c+h =4k-3$. Note that the following observations hold.

\begin{enumerate}[(i)]
\item Any vertex which has a neighbor in $A$ has exactly $k$ neighbors in $A$. Thus, $a \geq k-1$.
\item  Any vertex which has a neighbor in $C$ has exactly $k-1$ neighbors in $C$. Thus, $c \geq k-2$. 
\item Any vertex which has a neighbor in $V(H)$ has at least $k$ neighbors in $V(H)$. Thus, $h \geq k-1$.  
\end{enumerate}
Hence, in order to complete the proof of Theorem \ref{thm:GMregularcase}, we divide the remaining argument  in six different cases depending on $a \in \{k-1,k,k+1\}$ and $c \in \{k-2,k-1\}$. 

\begin{description}
\item[Case 1:] Suppose $a = k+1$, $c = k-1$, $b=k+1-r$, and $h = k+r-4$ where $3\leq r \leq k+1$. Since $h\leq 2k-3$, and $b\leq k-2$ thus, an initial force is not from $V(A)$ and $V(C)$. Hence, there are only four possibilities: a vertex in $V(H)$ makes an initial force within $H$, a vertex in $V(H)$ makes an initial force in $B$, a vertex in $\{b_0,b_1,\ldots,b_{k-1}\}$ makes an initial force in $H$, or the vertex $b_k$ makes an initial force in $H$. For any one of them, there are at most $h+b = 2k-3$ blue vertices in $V(H)$. According to Lemma \ref{lemma_about_H} 
and Corollary \ref{cor_about_H}, they are not enough to force $H$, and to induce any force from $C$. We need to argue that no force is induced from $V(A)$ at any step. Instead, we argue that there are at most an $r-3$ number of forces from $V(H)$ into $B$. For this, we use the first claim of Lemma \ref{lemma_about_H}. The $b$ number of vertices from $S\cap B$ may force vertices in $H$; those can further force in $H$ but cannot perform any subsequent force. For the first two possibilities, since $k$ vertices out of the $h$ vertices will not be able to force further, thus follows. Suppose the vertex $b_i$ makes an initial force in $H$ for the third possibility. Then we must have $|S \cap \{1,\ldots,k-1\}| \geq k-2$, and if the equality holds then the neighbor of $b_i$ in $\{0,k,\ldots,2k-2\}$ is in $S$. Since none of the $\{1,\ldots,k-1\}$ force further, and if the neighbor of $b_i$ was in $S$ it also can not force further, thus follows. For the fourth possibility, we must have $|S\cap \{2k-1,\ldots,3k-2\}| = k-1$. Since vertices in $\{2k-1,2k,\ldots,3k-2\}$ can only force $b_k$ in $B$, but that is already blue, thus follows.

\item[Case 2:] Suppose $a = k+1$, $c = k-2$, $b= k+1-r$, and $h = k+r-3$ where $2\leq r\leq k+1$. If $r=2$, then the blue vertices from $V(A)$ force the two white vertices in $B$. After that, there are only two possibilities: a vertex in $\{b_0,b_1,\ldots,b_{k-1}\}$ makes the next force in $H$ or vertex $b_k$ makes the next force in $H$. If the former occurs, then vertices in  $\{0,1,\ldots,2k-2\}$ are all blue. If the latter occurs, then vertices in $\{2k-1,2k,\ldots,3k-2\}$ are all blue. Since $C$ has one white vertex, there is no further force. 

Let $3\leq r \leq k+1$. Since $h \leq 2k-2$, and $b\leq k-2$ thus, an initial force is not from $V(A)$ and $V(C)$. Hence, there are only three possibilities: a vertex in $V(H)$ makes an initial force in $C$, a vertex in $\{b_0,b_1,\ldots,b_{k-1}\}$ makes an initial force in $H$, or the vertex $b_k$ makes an initial force in $H$. For any one of them, there are at most $2k-3+t$ blue vertices in $V(H)$ with $t=0,1$ number of vertices whose neighbors are all blue since $C$ has one white vertex. According to Lemma \ref{lemma_about_H} and Corollary \ref{cor_about_H}, they are not enough to force $H$, and to induce any force from $C$. We need to argue that no force is induced from $V(A)$ at any step. Instead, we argue that there are strictly less than an $r-2$ number of forces from $V(H)$ into $B$. For the first possibility, since $k+1$ vertices out of the $h$ vertices will not be able to force further, thus follows. Suppose the vertex $b_i$ makes an initial force in $H$ for the second possibility. Then we must have $|S \cap \{1,\ldots,k-1\}| \geq k-2$, and if the equality holds then the neighbor of $b_i$ in $\{0,k,\ldots,2k-2\}$ is in $S$. Since none of the $\{1,\ldots,k-1\}$ force further, and if the neighbor of $b_i$ was in $S$ it also can not force further, thus follows. For the fourth possibility, we must have $|S\cap \{2k-1,\ldots,3k-2\}| = k-1$. Since vertices in $\{2k-1,2k,\ldots,3k-2\}$ can only force $b_k$ in $B$, but that is already blue, thus follows.

\item[Case 3:] Suppose $a = k$, $c = k-1$, $b = k+1-r$, and $h = k+r-3$ where $2\leq r \leq k+1$. If $r=2$, then there are two possibilities: either all blue vertices from $\{b_0,\ldots,b_{k-1}\}$ force in $H$, or the vertex $b_k$ is blue and forces a vertex in $H$. For any of them, there is no further force. 

Let $3\leq r \leq k+1$. Since $h \leq 2k-2$, and $b\leq k-2$ thus, an initial force is not from $V(A)$ and $V(C)$. Hence, there are five possibilities: a vertex in $V(H)$ makes an initial force within $H$, a vertex in $V(H)$ makes an initial force in $B$, a vertex in $\{b_0,b_1,\ldots,b_{k-1}\}$ makes an initial force in $H$, the vertex $b_k$ makes an initial force in $H$, or a vertex in $V(B)$ makes an initial force in $A$. For any one of them, there are at most $2k-3+t$ blue vertices in $V(H)$ with $t=0,1$ number of vertices whose neighbors are all blue since $A$ has one white vertex. According to Lemma \ref{lemma_about_H} and Corollary \ref{cor_about_H}, they are not enough to force $H$, and to induce any force from $C$. We need to argue that no force is induced from $V(A)$ at any step. Instead, we argue that there are strictly less than an $r-2$ number of forces from $V(H)$ into $B$. For the first two possibilities, since $k$ vertices out of the $h$ vertices will not be able to force further, thus follows. Suppose the vertex $b_i$ makes an initial force in $H$ for the third possibility. Then we must have $|S \cap \{1,\ldots,k-1\}| \geq k-2$, and if the equality holds then the neighbor of $b_i$ in $\{0,k,\ldots,2k-2\}$ is in $S$. Since none of the $\{1,\ldots,k-1\}$ force further, and if the neighbor of $b_i$ was in $S$ it also can not force further, thus follows. For the fourth possibility, we must have $|S\cap \{2k-1,\ldots,3k-2\}| = k-1$. Since vertices in $\{2k-1,2k,\ldots,3k-2\}$ can only force $b_k$ in $B$, but that is already blue, thus follows. Finally, if we have an initial force from some vertex $b_i$ into $A$, we may have the next force from a vertex in $V(B)$ into $H$. In that case, we argue similarly as to the argument for the third and fourth possibilities above. 

\item[Case 4:] Suppose $a = k$, $c = k-2$, $b=k+1-r$, and $h = k+r-2$ where $1\leq r\leq k+1$. If $r=1$, then both white vertices in $V(A)$ and $V(B)$ will be forced into blue. After that there are only two possibilities: a vertex in $\{b_0,b_1,\ldots,b_{k-1}\}$ makes the next force in $H$, or the vertex $b_k$ makes the next force in $H$. If the former occurs, then vertices in $\{0,1,\ldots,2k-2\}$ are all blue. If the latter occurs, then vertices in $\{2k-1,2k,\ldots,3k-2\}$ are all blue. Since $C$ has one white vertex, there is no further force. If $r=2$, then there are only two possibilities: a vertex in $\{b_0,b_1,\ldots,b_{k-1}\}$ makes an initial force in $A$ or $H$, or the vertex $b_k$ makes an initial force in $A$ or $H$. If the former occurs, then vertices in $V(A)$, $V(B)$, and $\{0,1,\ldots,2k-2\}$ are all blue. If the latter occurs, then vertices in $V(A)$, $V(B)$, and $\{2k-1,2k,\ldots,3k-2\}$ are all blue. Since $C$ has one white vertex, there is no further force. If $r=k+1$, there is no further force after an initial force. 

Finally, let $3\leq r\leq k$. Since  $h \leq 2k-2$, and $b\leq k-2$ thus, an initial force is not from $V(A)$ and $V(C)$. Hence, there are only four possibilities: a vertex in $V(H)$ makes an initial force in $C$, a vertex in $\{b_0,b_1,\ldots,b_{k-1}\}$ makes an initial force in $H$, the vertex $b_k$ makes an initial force in $H$, or a vertex in $V(B)$ makes a force in $A$. For any one of them, there are at most $2k-3+t$ blue vertices in $V(H)$ with $t=0,1,2$ number of vertices whose neighbors are all blue since both $A$ and $C$ have one white vertex. According to Lemma \ref{lemma_about_H} and Corollary \ref{cor_about_H}, they are not enough to force $H$ and, to induce any force from $C$. We need to argue that no force is induced from $V(A)$ at any step. Instead, we argue that there are strictly less than an $r-2$ number of forces from $V(H)$ into $B$. For the first possibility,  since $k+1$ vertices out of the $h$ vertices will not be able to force further, thus follows. Suppose the vertex $b_i$ makes an initial force in $H$ for the second possibility. Then we must have $|S \cap \{1,\ldots,k-1\}| \geq k-2$, and if the equality holds then the neighbor of $b_i$ in $\{0,k,\ldots,2k-2\}$ is in $S$. All of the vertices in $\{1,\ldots,k-1\}$ and the neighbors of $b_i$ in $S$ can not force further. Moreover, the next force must be from $V(H)$ into $C$, but that would use at least one vertex out of the $h-(k-1)$, which can not force in $B$. Thus there are strictly less than $h- (k-1) - 1 = r-2$ number of forces from $V(H)$ into $B$. For the third possibility, we must have $|S\cap \{2k-1,\ldots,3k-2\}| = k-1$.  All the vertices in $\{2k-1,2k,\ldots,3k-2\}$ can only force $b_k$ in $B$, but that is already blue. Moreover, the next force must be from $V(H)$ into $C$, but that would use at least one vertex out of the $h-(k-1)$ vertices, which can not force in $B$. Thus there are strictly less than an $r-2 = h-(k-1)-1$ number of forces from $V(H)$ into $B$. Finally, if we have an initial force from some $b_i$ into $A$, we may have the next force from a vertex in $V(B)$ into $H$. In that case, we argue similarly as to the argument for the second and third possibilities above.

\item[Case 5:] Suppose $a = k-1$, $c = k-1$, $b = k+1-r$, and $h = k+r-2$ where $1\leq r\leq k+1$.  If $r=1$, there is no initial force. If $r=2$, there is no further force after an initial force. Let $r=k+1$. Then $H$ can not force itself with $2k-1$ blue vertices since each vertex has at least one white neighbor in $B$. On the other hand, the only possible force outside of $H$ is from $C$, which is not enough.

Finally, let $3\leq r \leq k$. Since $h \leq 2k-2$, and $b\leq k-2$ thus, an initial force is not from $V(A)$ and $V(C)$. Hence, there are only four possibilities: a vertex in $V(H)$ makes an initial force within $H$, a vertex in $V(H)$ makes an initial force in $B$, a vertex in $\{b_0,b_1,\ldots,b_{k-1}\}$ makes an initial force in $A$, or the vertex $b_k$ makes an initial force in $A$. For any one of them, there are at most $2k-3+t$ blue vertices in $V(H)$ with $t=0,1,2$ number of vertices whose neighbors are all blue since $A$ has two white vertices.  According to Lemma \ref{lemma_about_H} and Corollary \ref{cor_about_H}, they are not enough to force $H$, and to induce any force from $C$. We need to argue that no force is induced from $V(A)$ at any step. Instead, we argue that there are strictly less than an $r-2$ number of forces from $V(H)$ into $B$. For the first two possibilities, since $k$ vertices out of the $h$ vertices will not be able to force further, thus follows. Suppose the vertex $b_i$ makes an initial force in $A$ for the third possibility. At the next step, if some vertex $b_j \in \{b_0,\ldots,b_{i-1},b_{i+1},\ldots,b_{k-1}\}$ forces the white vertex in $A$, then the neighbor of $b_j$ from $\{0,k,\ldots,2k-2\}$ must be in $S$. Since none of the $k+1$ neighbors of $b_i$ and $b_j$ out of the $h$ vertices will be able to force in $B$, thus follows. For the fourth possibility, we must have $|S\cap \{2k-1,\ldots,3k-2\}| = k$. Since $h\leq 2k-2$ there is no further force. 

\item[Case 6:] Suppose $a = k-1$, $c = k-2$, $b = k+1-r$, and $h = k+r-1$ where $0\leq r \leq k+1$. If $r=0$, there is no initial force. If $r=1$, only one force is possible, from $V(B)$ into $V(A)$, and nothing after that. If $r=k+1$, only one force is possible, from $V(C)$ into $V(H)$, and nothing after that. Let $r=k$. If an initial force is from a vertex in $V(C)$, then after the final force, vertices in  $\{1,\ldots,k-1\}$ are always white. On the other hand, if an initial force is not from a vertex in $V(C)$, there are $2k-1$ blue vertices in $H$ with two blue vertices whose neighbors are all blue, and we know these are not enough to force $H$. Further, no force is induced from $V(A)$ at any step. 

Finally, let $2\leq r \leq k-1$. Since $h \leq 2k-2$, $b\leq k-1$, and $a=k-1$ thus, an initial force is not from $V(A)$ and $V(C)$. Hence, there are only three possibilities: a vertex in $V(H)$ makes an initial force in $C$, a vertex in $\{b_0,b_1,\ldots,b_{k-1}\}$ makes an initial force in $A$, or the vertex $b_k$ makes an initial force in $A$. For any one of them, there are at most $2k-3+t$ blue vertices in $V(H)$ with $t=0,1,2$ number of vertices whose neighbors are all blue since $A$ has two white vertices and $C$ has one white vertex. According to Lemma \ref{lemma_about_H} and Corollary \ref{cor_about_H}, they are not enough to force $H$, and to induce any force from $C$. We need to argue that no force is induced from $V(A)$ at any step. Instead, we argue that there are strictly less than an $r-2$ number of forces from $V(H)$ into $B$. For the first possibility, since $k+1$ vertices out of the $h$ vertices will not be able to force further, thus follows. Suppose the vertex $b_i$ makes an initial force in $A$ for the second possibility. At the next step, if some vertex $b_j \in \{b_0,\ldots,b_{i-1},b_{i+1},\ldots,b_{k-1}\}$ forces the white vertex in $A$, then the neighbor of $b_j$ from $\{0,k,\ldots,2k-2\}$ must be in $S$. Since none of the $k+1$ neighbors of $b_i$ and $b_j$ out of the $h$ vertices will be able to force in $B$, thus follows. For the third possibility, we must have $|S\cap \{2k-1,\ldots,3k-2\}| = k$. Since $h\leq 2k-2$ there is no further force. 
\end{description}

This completes the proof of Theorem \ref{thm:GMregularcase}. 
\qedwhite
\end{proof}

\section*{Acknowledgements}

The research of all the authors was partially supported by NSF grant 1916439. This project was started at the MRC workshop ``Finding Needles in Haystacks: Approaches to Inverse Problems using Combinatorics and Linear Algebra'', which took place in June 2021 with support from the National Science Foundation and the American Mathematical Society. The authors
are grateful to the organizers of this meeting. We thank S. Fallat for bringing reference \cite{fallatstudentphd} to our attention. \\A. Abiad is partially supported by the FWO (Research Foundation Flanders) grant 1285921N. The work of J.~Breen was supported in part by NSERC Discovery Grant RGPIN-2021-03775.


\end{document}